\newcommand*{\ii}{\mathrm{i}}
\newcommand*{\scrL}{\ensuremath{\mathscr{L}}}	
\newcommand*{\caH}{\ensuremath{\mathcal{H}}}
\newcommand*{\caI}{\ensuremath{\mathcal{I}}}
\newcommand*{\caS}{\ensuremath{\mathcal{S}}}		
\newcommand*{\caL}{\ensuremath{\mathcal{L}}}		
\newcommand*{\caO}{\ensuremath{\mathcal{O}}}		
\newcommand*{\caT}{\ensuremath{\mathcal{T}}}		
\newcommand*{\bP}{\mathbf{P}}					
\newcommand*{\bL}{\mathbf{L}} 					
\newcommand*{\bF}{\mathbf{F}} 					
\newcommand*{\wtC}{\widetilde{C}}
\newcommand*{\frS}{\mathfrak{S}}
\def\ds{\displaystyle}
\newcommand{\Hzzeta}{\mathcal{H}_{z,\zeta}}
\newcommand{\HzzetaRd}{\mathcal{H}_{z,\zeta}(\R^d)}
\newcommand{\Lip}{\operatorname{Lip}^{-1,-r}}
\newcommand{\Liploc}{\Lip_\mathrm{loc}}
\newcommand*{\N}{\mathbb{N}}										
\newcommand*{\Z}{\mathbb{Z}}										
\newcommand*{\R}{\mathbb{R}}										
\newcommand{\x}{\langle x\rangle}
\newcommand{\csi}{\langle \xi \rangle}
\newcommand{\pdd}{\langle D \rangle}
\newcommand{\jap}{\langle \cdot\rangle}
\def\<{{\langle}}
\def\>{{\rangle}}
\numberwithin{equation}{section}
\theoremstyle{plain}
\newtheorem{lemma}{Lemma}[section]
\newtheorem{theorem}[lemma]{Theorem}
\newtheorem{proposition}[lemma]{Proposition}
\newtheorem{assumption}[lemma]{Assumptions}
\theoremstyle{definition}
\newtheorem{definition}[lemma]{Definition}
\newtheorem{remark}[lemma]{Remark}
\newcommand{\beqsn}{\arraycolsep1.5pt\begin{eqnarray*}}
\newcommand{\eeqsn}{\end{eqnarray*}\arraycolsep5pt}
\newcommand{\beqs}{\arraycolsep1.5pt\begin{eqnarray}}
\newcommand{\eeqs}{\end{eqnarray}\arraycolsep5pt}
\definecolor{red}{rgb}{1,0,0}
\def\Op{ {\operatorname{Op}} }
\def\minull{(0,0,\cdots)}
\begin{document}
	
	\title[Chaos expansion solutions of magnetic Schr\"odinger Wick-type SPDEs on $\R^d$]
	{Chaos expansion solutions of\\a class of magnetic Schr\"odinger Wick-type stochastic equations on $\R^d$}
	
	\author{Sandro Coriasco}
	\address{Dipartimento di Matematica ``G. Peano'', Universit\`a degli Studi di Torino, via Carlo Alberto n.~10, I-10123 Torino, Italy}
	\email{sandro.coriasco@unito.it}
	
	\author{Stevan Pilipovi\'c}
	\address{Department of Mathematics and Informatics, Faculty of Sciences,
		University of Novi Sad, Trg D. Obradovi\'ca 4, 
		RS-21000 Novi Sad, Serbia}
	\email{pilipovic@dmi.uns.ac.rs}
	
	\author{Dora Sele\v{s}i}
	\address{Department of Mathematics and Informatics, Faculty of Sciences,
		University of Novi Sad, Trg D. Obradovi\'ca 4, 
		RS-21000 Novi Sad, Serbia}
	\email{dora@dmi.uns.ac.rs}
	
	\date{}

\begin{abstract}
	We treat some classes of linear and semilinear stochastic partial differential equations of Schr\"odinger type on $\R^d$,
	involving a non-flat Laplacian, within the framework of white noise analysis,
	combined with Wiener-It\^o chaos expansions and pseudodifferential operator methods. The initial data and potential term of the Schr\"odinger operator are assumed to be generalized stochastic processes that have 
	 spatial dependence.	We prove that the equations under consideration have unique solutions in the appropriate (intersections of
	weighted) Sobolev-Kato-Kondratiev spaces. 
\end{abstract}

\keywords{Stochastic partial differential equations, Wick product, Chaos expansions,
Schr\"odinger equation, pseudodifferential calculus}

\subjclass[2010]{Primary: 35L10, 60H15; Secondary: 35L40, 35S30, 60G20, 60H40}

\maketitle

\tableofcontents

%
\section{Introduction}\label{sec:intro}

The Schr\"odinger equation lies at the heart of quantum mechanics, providing a fundamental framework for describing the behavior and evolution of quantum systems. In many real-world scenarios, quantum systems are subject to environmental fluctuations and stochastic influences, which necessitate the development of advanced mathematical tools to accurately model their dynamics. The stochastic Schr\"odinger equation is a powerful extension of the Schr\"odinger equation that takes into account random elements 
(for instance, fluctuations and uncertainties can be  incorporated into the equation via white noise or other singular generalized stochastic processes), enabling a more 
comprehensive representation of quantum dynamics in stochastic environments. By combining stochastic analysis with pseudodifferential calculus, we develop a robust 
mathematical framework, capable of addressing quantum systems, influenced by highly singular, fluctuating and unpredictable factors. 

In this paper we focus on Cauchy problems associated with Schr\"odinger type differential operators, 
allowing random terms to be present both in the initial conditions, as well as in the potential term of the involved operators,
and we aim at working within the environment of generalized functions. 
Having all these highly random terms leads to singular solutions that do not allow to use ordinary multiplication. A widely employed approach 
to overcome this difficulty consists in its renormalization, also known as the so-called Wick product. 
%
The Wick product is known to represent the highest order stochastic approximation of the ordinary product \cite{Mikulevicius}, and has been used in many models together with the Wiener chaos expansion method, see, e.g., 
\cite{Hida, HOUZ, Milica, Milica2, Boris, BorisBook, GRPW, ps, DP2, DoraS, SM23}. 
By replacing ordinary products, the Wick product helps regularizing singularities in the equation,
ensuring that the solutions remain well-defined, even in the presence of singularities that make an ordinary product between stochastic processes impossible. 
This is related to the celebrated impossibility result of Schwartz in the deterministic case, that makes higher powers of a Dirac delta distribution not possible within 
linear distribution theory. 

The Wick product also involves integrating over all possible outcomes or sample paths of the
underlying stochastic processes. This integration captures the combined influence of random variables
across the entire sample space, rather than focusing only on individual outcomes or pointwise interactions. Similarly as convolution integrals capture 
the influence of past states or trajectories on current behavior as a "memory effect" (e.g., fractional derivatives in applications), 
the Wick product can be viewed to capture the joint influence of random variables on the overall system dynamics, integrating the collective
behavior of stochastic processes across all possible outcomes. One important consequence of using the Wick product is the unbiasedness of the solution to the model 
SPDE: the expected value of the SPDE is equal to the solution of the SPDE with no noise (in our case, the zeroth coefficient in the chaos expansion).

Through this approach we aim to pave the way for further studies in various noisy and fluctuating settings. 
In particular, the magnetic Schr\"odinger type operators that here we study on $\R^d$ could be considered also on the wider setting of suitable classes of
non-compact Riemannian manifolds as spatial domains (see, e.g., \cite{CD21, Krainer, ME}). This could open up new avenues of exploration, for instance
comparing our results with those coming from the algebraic and microlocal approach to SPDEs, cf. \cite{BDR23}, \cite[Sections 1.1 and 1.2]{DDRZ22} (under suitable
hypotheses \textit{at infinity} on the non-compact base manifold $M$, and working with the analog of tempered distributions on it), 
or either in areas where curved, exotic geometries play a relevant role, such as metamaterial design, cf. \cite{GCVetal22, NYKA22},
or in manipulating electromagnetic waves at the nanoscale, cf. \cite{CSHZCXP25, VGPZ17}. 

\medskip

In recent years, pseudodifferential operators have emerged as a valuable mathematical tool in the study of partial differential equations and their stochastic counterparts, leading to an even more rapid development in this area (see, for instance, \cite{ACS19b, ACS19a, linearpara, ACS19c, semilinearpara, alessiandre} 
and the references quoted therein). Pseudodifferential operators extend the concept of ordinary differential operators, enabling the analysis and manipulation of functions that exhibit singular behavior. By employing pseudodifferential operators onto singular input data, in our setting, on symbol classes satisfying global estimates on the whole phase-space $\R^d\times\R^d$ (see, e.g., \cite{cordes}), combined with the chaos expansion methods from stochastic analysis, we can address the challenges posed by both singularity and stochasticity and capture the intricate interplay between quantum mechanics, pseudodifferential calculus and stochastic processes. The current paper is a natural continuation of our previous paper \cite{CPS1}, devoted to hyperbolic SPDEs, also building onto this synergy of powerful tools. We then adopt here the same notation employed in
\cite{CPS1}, and a similar functional setting. We also mention that, recently, a white noise analysis of singular SPDEs has been performed in \cite{GMN22}, employing
Watanabe Sobolev spaces, which differs by the weighted Sobolev spaces we used in \cite{CPS1} and use again here.

Henceforth, in this paper we will  present techniques for solving stochastic partial differential equations of Schr\"odinger type resulting from the integration of these, nowadays classical, two powerful tools: chaos expansions and pseudodifferential techniques.
The model on which we will focus is an initial value (that is, Cauchy) problem for a differential operator of Schr\"odinger type on a curved space, which we will study globally on $\R^d$, namely,
\begin{equation}\label{eq:SPDE}
\left\{
\begin{array}{l} 
\bL(x,\partial_t,\partial_x;\omega)\lozenge u(t,x;\omega)= -i\partial_t u(t,x;\omega) + \bP(x,\partial_x;\omega)\lozenge u(t,x;\omega)= \bF(t,x,u(t,x;\omega)), \quad (t,x)\in[0,T]\times\R^d,\;\omega\in\Omega,
\\
u(0,x;\omega)=u_0(x;\omega),\quad x\in\R^d,\;\omega\in\Omega,
\end{array}\right.
\end{equation}
where $(\Omega, \mathcal{F}, P)$ is a probability space, $\lozenge$ denotes the Wick product (whose definition is recalled in Section \ref{subs:wpdsoce}), while $\bP$ plays the role of the stochastic Hamiltonian and $\bF$ introduces nonlinear perturbations into the equation (specific assumptions on these operators will be provided in Section \ref{sec:globsol}).
Note that the action of $\bL$ and $\bP$ by $\lozenge$ in \eqref{eq:SPDE} is a shorthand notation, since, for instance, the differential parts act as such, as it will be precisely described in Section \ref{sec:globsol} below. Explicitly:
\begin{itemize} 
\item $\bP$ is a stochastic analog (and a generalization) of a partial differential operator of the form
\[
H=\frac12\sum_{j,\ell=1}^d\partial_{x_j}\left(a_{j\ell}(x)\partial_{x_\ell}\right)+\sum_{j=1}^d m_{1j}(x)\partial_{x_j}+V(x;\omega),
\]
allowing for randomness in the potential term $V$, while the magnetic terms $m_{1j}$ and the geometry of the space, encoded into the coefficients $a_{j\ell}$ 
(see Remark \ref{rem:coeff}), are kept deterministic (see Section \ref{sec:globsol} below for the general form and the precise hypotheses); 
\item $\bF$, the diffusion term, is a real-valued function, subject to certain regularity conditions
(see below);
\item $u$ is an unknown stochastic process, called \emph{solution} of the Cauchy problem \eqref{eq:SPDE}.
\end{itemize}

We will employ chaos expansions, in connection with the properties of the solution operator of the associated
deterministic Schr\"odinger operator, defined through objects globally defined on $\R^d$, similarly to our
analysis of the hyperbolic Cauchy problems in this setting.
The main idea we use in this paper relies on the chaos expansion method: first, one uses the chaos expansion
of all stochastic data in the equation to convert the SPDE into an infinite system of deterministic PDEs, then the PDEs are recursively solved, and finally one must sum up these solutions to obtain the chaos expansion form of the solution of the initial SPDE. The
crucial point is to prove convergence of the series given by the
chaos expansion that defines the solution, and this part relies on obtaining good energy estimates of the PDE solutions, proving their regularity and using estimates on the Wick products. This approach has many advantages. Most notably, it provides an \textit{explicit form of the solution} of the SPDE, from which one
can directly compute the expectation, variance and other moments. It is
convenient also for numerical approximations, by truncating the series
in the chaos expansion to finite sums. Elements of these techniques and the corresponding notation are recalled in Appendix \ref{sec:prel}.

The second main tool we use in this paper is the $SG$ calculus of pseudodifferential operators (further abbreviated as $SG$ theory).
For the convenience of the reader, a short summary of the notation and the main features of
the $SG$ calculus are given in Appendix \ref{subs:sgcalc}. In particular, we will rely on results about Schr\"odinger type operators due to Craig \cite{craig}.

\medskip

The paper is organized as follows.  Section \ref{sec:globsol} is devoted to proving the first main result of the paper, that is, 
existence and uniqueness of a local in time solution to the linear version of equation \eqref{eq:SPDE}.  In the subsequent Section \ref{sec:globsolsemilin}, we prove our second main result, namely, 
existence and uniqueness of a local in time solution to the semilinear equation \eqref{eq:SPDE}.
In Section \ref{sec:wickp} we prove our third main result, namely, existence and uniqueness of a local in time solution to the 
nonlinear equation where the diffusion term takes on the form of Wick-powers, specifically, Wick-squares in equation \eqref{eq:SPDE}.
In the Appendix we have included a short summary of basic results about the two main tools we employ:
in Appendix \ref{sec:prel}, we provide the notation and an overview of the white noise analysis theory, including chaos expansions of generalized stochastic processes, Wick products and stochastic differential operators; in Appendix \ref{subs:sgcalc},
we recall the notation and fundamental notions of the $SG$ pseudodifferential calculus and the associated weighted Sobolev spaces.

\section*{Acknowledgements}
The first author has been partially supported by his own INdAM-GNAMPA Project 2023, Grant Code CUP$\_$E53C22001930001, and by
the Italian Ministry of the University and Research - MUR, within the framework of the Call relating to the scrolling of the final rankings of the 
PRIN 2022 - Project Code 2022HCLAZ8, CUP D53C24003370006 (PI A. Palmieri, Local unit Sc. Resp. S. Coriasco).
The first author also gratefully acknowledges the support by the Department of Mathematics and Informatics of the University of Novi Sad (Serbia), during his stays there in A.Y. 2022/2023 and A.Y. 2023/2024, when most of the results illustrated in this paper have been obtained. The second author was supported by the Serbian Academy of Sciences and Arts, project F10. The third author gratefully acknowledges the financial support of the Ministry of Science, Technological Development and Innovation of the Republic of Serbia (Grants No. 451-03-66\slash2024-03\slash200125 and 451-03-65\slash2024-03\slash200125). The authors are grateful to the anonymous Referee, for the careful reading of the 
manuscript, the constructive criticism and the suggestions, aimed at improving the overall quality of the paper.

%
%
%

\section{Solutions of linear magnetic stochastic Schr\"odinger equations on $\R^d$}\label{sec:globsol}
In this section we treat the Cauchy problems \eqref{eq:SPDE}, associated with a linear magnetic Schr\"odinger operators of the form
\beqs\label{elle}
\bL=-i\partial_t+\bP,
\eeqs
with coefficients globally defined and polynomially bounded on the whole Euclidean space $\R^d$, as will be in detail described in Assumptions \ref{hyp:P}. We refer the reader to \cite{cordes, CPS1}, Appendix \ref{sec:prel} and Appendix \ref{subs:sgcalc}, 
for notation, definition of the symbol classes $S^{m,\mu}$, the associated operators, and the properties of the scale of (Sobolev-Kato type) spaces, 
on which such operators naturally act. In particular, we need to introduce a subclass of the Sobolev-Kato spaces, of which we recall here below the
definition.
\begin{definition}\label{def:Hpint}
	\begin{itemize}
	\item[(i)] For any $(s,\sigma)\in\R^2$, the Sobolev-Kato space is defined as
	\begin{equation}\label{eq:defHssigmaalt}
		H^{s,\sigma}(\R^d)=\{u\in\caS^\prime(\R^d)\colon \jap^s u\in H^\sigma(\R^d)\},
	\end{equation}
	where $H^\sigma(\R^d)$ is the usual Sobolev space of order $\sigma$ on $\R^d$ and $\<y\>^s=(1+|y|^2)^\frac{s}{2}$, $y\in\R^d$.
	\item[(ii)] For any $z\in\N$, $\zeta\in\R$, define $\HzzetaRd :=\ds\bigcap_{j=0}^z H^{z-j,j+\zeta}(\R^d)$.
		The spaces $\mathcal H_{z,\zeta}(\R^d)$ are equipped with the norm 
		\begin{equation}\label{eq:normHpint}
			\|u\|_{\HzzetaRd}:=\ds\sum_{j=0}^z \|u\|_{H^{z-j,j+\zeta}(\R^d)}.
		\end{equation}
	\end{itemize}
\end{definition}
\noindent
By the properties of the Sobolev-Kato spaces recalled in Appendix \ref{subs:sgcalc}, it follows that 
$H^{z,z+\zeta}(\R^d)\subset \mathcal H_{z,\zeta}(\R^d)\subset H^{z,\zeta}(\R^d)$.
\begin{remark}\label{rem:Hpint}
\begin{itemize}
\item[(i)] Recall that the spaces $H^{r,\rho}$ with $r\geq 0$ and $\rho>d/2$ are algebras. This implies that also
the space $\Hzzeta$ is an algebra for $\zeta>d/2$. 
\item[(ii)] The spaces based on the norm \eqref{eq:normHpint} for an arbitrary $\zeta\in\N$ appear in \cite[Page XX-12]{craig}, where, in particular,
	the \textit{unweighted} Sobolev spaces $H^{0,\rho}$ are denoted by $H^\rho$, and the spaces , here
	$\caH_{r,0}$, of spatial moments up to order $r\in\N$, are denoted by $W^r$.
\end{itemize}
%
\end{remark}

The operator $$\bP(x,D_x;\omega): C([0,T],\Hzzeta(\R^d))\otimes(S)_{-1} \rightarrow C([0,T],\Hzzeta(\R^d))\otimes(S)_{-1}$$ is a stochastic operator in the sense of Lemma \ref{doralemma2}, acting as a spatial differential operator and stochastic (Wick) multiplication operator. It consists of a family of deterministic operators $P_\alpha = P_\alpha(x,D_x)$, $\alpha\in\mathcal I$, each mapping $C([0,T],\Hzzeta(\R^d))$ into itself.

\medskip

Recall, $\bP$ acts onto $u=u(t,x;\omega)=\sum_{\gamma\in\caI}u_\gamma(t,x) H_\gamma(\omega) \in C([0,T],\Hzzeta(\R^d))\otimes(S)_{-1} $ as

\begin{equation}\label{eq:Paction}
	(\bP\lozenge u)(t,x;\omega)=
	\sum_{\gamma\in\caI}\left[\sum_{\beta+\lambda=\gamma}(P_\beta u_\gamma)(t,x)\right]\cdot H_\gamma(\omega).
\end{equation}

\medskip
Now we list some assumptions that will make the operator $\bP$ be well-defined, and incorporate sufficient conditions that will ensure the solvability, in our chosen stochastic setting, of the equation 

\[
	\bL\lozenge u=-i\partial_t u+\bP\lozenge u =0.
\]

\begin{assumption}\label{hyp:P}
	Let $\bP$ be such that:
\begin{itemize}
	\item its expectation, that is, principal part, is of the form:
\begin{equation}\label{eq:operatortop}
\begin{aligned}
 P_{\minull}=P(x,\partial_x) &=\ds\frac12\sum_{j,\ell=1}^d\partial_{x_j}\left(a_{j\ell}(x)\partial_{x_\ell}\right)+m_1(x,-i\partial_x)+m_{0,\minull}(x,-i\partial_x)
 \\
 &=a(x,D_x) +a_1(x,D_x) +m_1(x,D_x)+m_{0,\minull}(x,D_x),
 \end{aligned}
\end{equation}
having set, as usual, $D_x=-i\partial_x$; 
\item the symbols appearing in the \textit{principal part} $P_{\minull}$ of $\bP$, namely,
\\
$a(x,\xi):=-\ds\frac12\sum_{j,\ell=1}^d a_{j\ell}(x)\xi_j\xi_\ell, \quad a_{j\ell}=a_{\ell j}$, $j,\ell=1,\dots,d$, \textit{Hamiltonian} of the equation, \\
$a_1(x,\xi) := \ds\frac i2 \ds\sum_{j,\ell=1}^d \partial_{x_j}a_{j\ell}(x)\xi_\ell$,\\ $m_1(x,\xi)$ coming from the magnetic field, and $m_{0,\minull}(x,\xi)$ the expectation of the potential term,\\
are such that (see \cite{craig}):
\begin{enumerate}
\item\label{ctr:aorderzero} the \textit{Hamiltonian} satisfies $a\in S^{0,2}(\R^d)$;
\item \label{ctr:aorder1}the \textit{lower order metric terms} satisfy $a_1\in S^{-1,1}(\R^d)$;
\item \label{ctr:ell} $a$ satisfies, for all $x,\xi\in\R^d$, $C^{-1}|\xi|^2\leq a(x,\xi)\leq C|\xi|^2$;
\item the \textit{magnetic field term} satisfies $m_1\in S^{0,1}(\R^d)$ and is real-valued;
\item the expected value of the \textit{potential} satisfies $m_{0,\minull}\in S^{0,0}(\R^d)$;
\end{enumerate}
\item the non-principal parts of the operator $P_{\beta}= P_\beta(x,\partial_x)=m_{0\beta}(x,D_x)$, $\beta\in\caI$, $\beta\not=\minull$, are such that:
\begin{enumerate}\setcounter{enumi}{5}
	\item $m_{0\beta}$
	satisfies $m_{0\beta}\in S^{0,0}(\R^d)$, $\beta\in\caI$, $\beta\not={\minull}$; 
	\item there exists $r\geq 0$ such that 	
\begin{equation}\label{eq:LKest}
	\sum_{\beta\in\mathcal I\atop\beta\neq\minull} 
		\|P_{\beta}\|_{\caL(\, C([0,T],\Hzzeta(\R^d)),
			\,
			C([0,T],\Hzzeta(\R^d)
			\,)} 
		(2\mathbb N)^{-\frac{r}{2}\beta}<\infty.
\end{equation}
\end{enumerate}
\end{itemize}
\end{assumption}

\begin{remark}
	In the deterministic case, a basic model of magnetic Schr\"odinger operator is 
	\[
		Q=\frac{1}{2}\left[\sum_{j,k=1}^d Q_j g_{j\ell}(x) Q_\ell-V(x)\right], \quad Q_j= hD_j-\mu A_j(x),
	\]
	with $h>0$ a (small) \textit{Plank constant} and $\mu>0$ a (large) coupling constant. The functions $g_{j\ell},A_j,V$, $j,k=1,\dots,d$, are usually
	assumed to be smooth and real-valued. The coefficients $g_{jl(x)}$ encode the curved geometry of the space, the functions $(A_1(x),\cdots,A_d(x))$ relate to the electromagnetic vector potential, while $V(x)$ is the scalar potential of the electric field.
	
	For physical reasons, it is natural to assume that $V$ might be random (underlying some fluctuations and uncertainty), but keeping the geometry of the space and the magnetic potential deterministic. Hence, we assume that $V$ is a spatial stochastic process with expansion $V(x;\omega)=\sum_{\alpha\in\caI}V_\alpha(x)H_\alpha(\omega)$.

	It is straightforward to check that the stochastic counterpart of this operator will have the form 
	\[
		Q=a(x,D_x)+a_1(x,D_x)+m_1(x,D_x)+m_0(x,D_x;\omega),
	\]
	where
	\[
		a_{j\ell}=-h^2g_{j\ell}, \quad m_1(x,\xi)=-\frac{h\mu}{2}\sum_{j,\ell=1}^d A_j(x)g_{j\ell}(x)\xi_\ell, 
		\quad m_0(x,\xi;\omega)=\frac{1}{2}\left[\mu^2\sum_{j,\ell=1}^d g_{j\ell}(x) A_j(x)\,A_\ell(x)-V(x;\omega)\right],
	\]
	hence it is clear that
	$$E(Q)=P_{\minull}$$
with $a,a_1,m_1,m_{0,\minull}$ as in \eqref{eq:operatortop}, and $$m_{0,\minull}(x,\xi)=E(m_0(x,\xi;\omega))=\frac{1}{2}\left[\mu^2\sum_{j,\ell=1}^d g_{j\ell}(x) A_j(x)\,A_\ell(x)-V_{\minull}(x)\right].$$
\end{remark}
We first recall key results in the analysis of the deterministic Schr\"odinger operators of the type we are considering, proved in
\cite{craig} (see also, e.g., \cite{CW,debouard,KPV05,KPV07,yajima}). 
\begin{theorem}[{\cite[Page XX-12]{craig}}]\label{T14craig}
Under Assumptions \ref{hyp:P}, the solution $u(t)$ to the associated deterministic Cauchy problem \eqref{eq:SPDE} with $u_0\in\HzzetaRd$, $\bF\equiv 0$ and $P_\gamma\equiv0$, 
$\gamma\not=\minull$, satisfies the estimate
\[\|u(t)\|_{\mathcal H_{z,\zeta}(\R^d)}\leq e^{C_{z,\zeta}t}\|u_0\|_{\mathcal H_{z,\zeta}(\R^d)},\quad t\in[0,T_0], \]
for $T_0\in(0,T]$ and a positive constant $C_{z,\zeta}$ depending only on $z,\zeta\in\N$.
\end{theorem}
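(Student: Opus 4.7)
The plan is to establish the bound via the standard energy method, exploiting the almost self-adjointness of $P_{\minull}$ on $L^2(\Rd)$ together with the fact that the family of Sobolev-Kato norms defining $\HzzetaRd$ is closed under commutation with the principal part of $P_{\minull}$, and concluding by Gr\"onwall's inequality.

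For the base level $z=0$, $\zeta=0$, I would differentiate $\|u(t)\|_{L^2}^2$ in time and, using $\partial_t u = -i\,P_{\minull}u$, rewrite it as $-2\Im\scal{P_{\minull}u}{u}$. The combination $a(x,D_x)+a_1(x,D_x)=\frac12\sum_{j,\ell}\partial_{x_j}\bigl(a_{j\ell}(x)\partial_{x_\ell}\cdot\bigr)$ is formally self-adjoint on $L^2(\Rd)$, since the $a_{j\ell}$ are real and symmetric; $m_1(x,D_x)$ has a real symbol in $S^{0,1}(\Rd)$, so by the $SG$ symbolic calculus its antisymmetric part is a zeroth order operator; and $m_{0,\minull}(x,D_x)\in\Op S^{0,0}(\Rd)$ is $L^2$-bounded. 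These considerations give $\frac{d}{dt}\|u(t)\|_{L^2}^2\leq C\|u(t)\|_{L^2}^2$, and Gr\"onwall delivers the base case.

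For the higher-order norms, introduce, for $0\leq j\leq z$, the operators $\Lambda_j:=\x^{z-j}\pd^{j+\zeta}\in\Op S^{z-j,\,j+\zeta}(\Rd)$ and the energy $E(t):=\sum_{j=0}^z\|\Lambda_ju(t)\|_{L^2}^2$. Differentiating in $t$ and inserting the equation, each $\frac{d}{dt}\|\Lambda_ju\|_{L^2}^2$ splits into a contribution of the conjugated principal part, which vanishes modulo a bounded correction once the imaginary part is taken, thanks to the self-adjointness of $P_{\minull}$, and a commutator contribution $2\Im\scal{[\Lambda_j,P_{\minull}]u}{\Lambda_ju}$. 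By the $SG$ composition rules, $[\Lambda_j,P_{\minull}]\in\Op S^{z-j-1,\,j+\zeta+1}(\Rd)$, hence it maps $H^{z-(j+1),\,(j+1)+\zeta}(\Rd)$ continuously into $L^2(\Rd)$. Via Cauchy-Schwarz, the commutator contribution is bounded, for $j<z$, by $C\,\|u\|_{H^{z-(j+1),\,(j+1)+\zeta}}\,\|u\|_{H^{z-j,\,j+\zeta}}$, which is the product of two norms already appearing in $\|u\|_{\HzzetaRd}$, and thus by $CE(t)$ after applying the arithmetic-geometric inequality. Summing over $j$ yields $\frac{d}{dt}E(t)\leq C_{z,\zeta}\,E(t)$, and Gr\"onwall gives the stated bound.

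The main obstacle is the borderline index $j=z$, where a naive commutator estimate produces a loss of one $\xi$-order that is not absorbed by any of the terms of the sum defining $E(t)$. This is the point where Schr\"odinger-type estimates differ structurally from hyperbolic ones and where one has to invoke, in addition, the skew-adjointness of $[\Lambda_z,P_{\minull}]$, which forces the leading contribution to $\Im\scal{[\Lambda_z,P_{\minull}]u}{\Lambda_zu}$ to cancel, together with the ellipticity $C^{-1}|\xi|^2\leq a(x,\xi)\leq C|\xi|^2$ and a sharp G\aa rding or weighted commutator argument in the spirit of \cite{craig}. Existence of the solution on $[0,T_0]$ can be obtained in parallel, either through a standard regularization/Galerkin limiting procedure or by constructing the propagator of $\bL$ as a Fourier integral operator; the above a priori estimate then upgrades the local solution to the bound stated in the theorem.
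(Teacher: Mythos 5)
First, a point of context: the paper does not prove Theorem \ref{T14craig} at all --- it is imported verbatim from Craig \cite{craig} (note the citation in the theorem header), and the present paper only uses the resulting propagator bounds via Remark \ref{rem:S}. So there is no in-paper argument to compare yours against; what follows measures your sketch against what a complete proof actually requires.

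Your skeleton is the right one, and for $0\le j<z$ your commutator bookkeeping is correct and captures the raison d'\^etre of the spaces $\HzzetaRd$: since $\partial_x a\in S^{-1,2}$, one has $[\Lambda_j,P_{\minull}]\in\Op(S^{z-(j+1),\,(j+1)+\zeta})$, so the commutator lands exactly on the next norm of the intersection, trading one derivative for one power of decay. The genuine gap is at $j=z$, exactly where you place it, but your proposed resolution does not work. The leading part of $[\Lambda_z,P_{\minull}]$ is $\Op\bigl(\tfrac1i\,\partial_\xi\csi^{z+\zeta}\cdot\partial_x a\bigr)$, i.e.\ $\tfrac1iB$ with $B$ of real symbol in $S^{-1,\,z+\zeta+1}$; hence $\Im\scal{[\Lambda_z,P_{\minull}]u}{\Lambda_z u}$ becomes $-\Re\scal{Bu}{\Lambda_z u}$, and skew-adjointness of $[\Lambda_z,P_{\minull}]$ only forces $\Re\scal{Au}{u}$-type pairings to vanish, not $\Im\scal{Au}{\Lambda_z u}$ with $\Lambda_z u\ne u$. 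Symmetrizing, this term is governed by an operator with real principal symbol in $S^{-1,\,2(z+\zeta)+1}$ and is generically of size $\|u\|^2_{H^{-1/2,\,z+\zeta+1/2}}$: half a derivative above anything controlled by your energy $E(t)$, compensated only by half a power of spatial decay. This is not a removable technicality --- it is the long-range local-smoothing obstruction for variable-coefficient Schr\"odinger flows, and closing it requires a positive-commutator (Doi-type) construction, i.e.\ essentially the entire content of the cited work, not a cancellation. Your appeal to ``a sharp G\aa rding or weighted commutator argument in the spirit of \cite{craig}'' therefore defers precisely the step that makes the theorem nontrivial; as written, the proof is incomplete at the decisive point (and the existence part is likewise only asserted).
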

\begin{remark}\label{rem:coeff}
	\begin{enumerate}
	\item [(i)] The symbol spaces $S^{m,\mu}$ are denoted by $S^{\mu,m}(1,0)$ in \cite{craig}, where it is remarked that the 
	ellipticity condition \eqref{ctr:ell}, together with the other hypotheses on $a$ and $a_1$,
	implies that the matrix $(a_{j\ell})$ is invertible, as well as that the Riemannian metric given by 
	the matrix $(a_{j\ell})^{-1}=(a^{j\ell})=\mathfrak{a}$ is asymptotically flat.
	\item[(ii)] By the hypotheses on $\mathfrak{a}$, our analysis actually covers the case
	\[
		P(x,\partial_x)=\frac{1}{2}\Delta_a+\widetilde{m}_1(x,\partial_x)+m_0(x,\partial_x),
	\]
	where $\widetilde m_1\in S^{0,1}$, and $\Delta_a$ is the Laplace-Beltrami operator associated with $\mathfrak{a}$, see \cite[p.XX-4]{craig}.
	\end{enumerate}
\end{remark}
\begin{remark}\label{rem:S}
As a consequence of Theorem \ref{T14craig}, the \textit{propagator} $S$ (or, equivalently, the fundamental solution) of $P$ defines continuous maps $S(t):\Hzzeta\rightarrow \Hzzeta$, whose norms can be bounded by $e^{C_{z,\zeta}t}$, $t\in[0,T_0]$, $z,\zeta\in\N$.
\end{remark}
We can now prove the first main result of the paper, which is the next Theorem \ref{thm:mainglob}.
\begin{theorem}\label{thm:mainglob}
	Let $\bP$ in \eqref{eq:SPDE} satisfy Assumptions \ref{hyp:P}. 
	%
	Assume also $u_0\in \Hzzeta(\R^d)\otimes (S)_{-1,-r}$ and $\bF\equiv0$.
	Then, there exists a time-horizon $T^\prime\in(0,T]$ such that the 
	homogeneous linear Cauchy problem \eqref{eq:SPDE} 
	admits a unique solution
	 $\displaystyle u\in  C([0,T^\prime], \Hzzeta(\R^d))\otimes (S)_{-1,-r}$.
\end{theorem}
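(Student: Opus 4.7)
The plan is to reduce the Wick-type linear SPDE to an infinite triangular system of deterministic Schr\"odinger problems by expanding everything in chaos, solve each one recursively using the propagator from Theorem \ref{T14craig}, and then close the scheme by summing the resulting energy estimates against the weights defining $(S)_{-1,-r}$.

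First I would write $u(t,x;\omega)=\sum_{\gamma\in\caI}u_\gamma(t,x)H_\gamma(\omega)$ and $u_0(x;\omega)=\sum_{\gamma\in\caI}u_{0,\gamma}(x)H_\gamma(\omega)$, and insert these into $-i\partial_t u+\bP\lozenge u=0$. By the Wick action formula \eqref{eq:Paction} (the inner sum being $\sum_{\beta+\lambda=\gamma} P_\beta u_\lambda$), matching coefficients of each $H_\gamma$ produces the deterministic Cauchy problem
\begin{equation*}
-i\partial_t u_\gamma+P_{\minull}\,u_\gamma=-\sum_{\substack{\beta+\lambda=\gamma\\ \beta\neq\minull}} P_\beta\,u_\lambda,\qquad u_\gamma(0)=u_{0,\gamma}.
\end{equation*}
Since $\beta\neq\minull$ forces $|\lambda|<|\gamma|$, the right-hand side only involves previously computed chaos coefficients, so the system is genuinely recursive, with base case $\gamma=\minull$ corresponding to the source-free Schr\"odinger problem for $P_{\minull}$.

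Next I would apply Theorem \ref{T14craig} (together with Remark \ref{rem:S}) to obtain a propagator $S(t)\colon\Hzzeta(\R^d)\to\Hzzeta(\R^d)$ with $\|S(t)\|\leq e^{C_{z,\zeta}t}$ on $[0,T]$. Duhamel's formula yields
\begin{equation*}
u_\gamma(t)=S(t)u_{0,\gamma}+i\int_0^t S(t-s)\!\!\sum_{\substack{\beta+\lambda=\gamma\\ \beta\neq\minull}}\!\! P_\beta\,u_\lambda(s)\,ds,
\end{equation*}
which, together with the fact that each $P_\beta$ preserves $C([0,T],\Hzzeta)$, determines $u_\gamma\in C([0,T],\Hzzeta(\R^d))$ uniquely by induction on $|\gamma|$, along with the pointwise-in-time energy estimate
\begin{equation*}
\|u_\gamma(t)\|_{\Hzzeta}\leq e^{C_{z,\zeta}t}\|u_{0,\gamma}\|_{\Hzzeta}+\int_0^t e^{C_{z,\zeta}(t-s)}\sum_{\beta\neq\minull}\|P_\beta\|\,\|u_{\gamma-\beta}(s)\|_{\Hzzeta}\,ds,
\end{equation*}
where $\|P_\beta\|$ denotes the operator norm on $C([0,T],\Hzzeta(\R^d))$.

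The hard part is to show that the series $\sum_\gamma u_\gamma H_\gamma$ converges in $C([0,T'],\Hzzeta(\R^d))\otimes(S)_{-1,-r}$. To this end I would multiply the previous estimate by $(2\N)^{-r\gamma/2}$, use the factorisation $(2\N)^{-r\gamma/2}=(2\N)^{-r\beta/2}(2\N)^{-r(\gamma-\beta)/2}$, and apply Young's convolution inequality on $\caI$, $\|a\ast b\|_{\ell^2(\caI)}\leq\|a\|_{\ell^1(\caI)}\|b\|_{\ell^2(\caI)}$, to obtain for
\begin{equation*}
M(t):=\Bigl(\sum_{\gamma\in\caI}(2\N)^{-r\gamma}\|u_\gamma(t)\|_{\Hzzeta}^2\Bigr)^{1/2}
\end{equation*}
the integral inequality
\begin{equation*}
M(t)\leq e^{C_{z,\zeta}t}\,\|u_0\|_{\Hzzeta(\R^d)\otimes(S)_{-1,-r}}+K\int_0^t e^{C_{z,\zeta}(t-s)}M(s)\,ds,
\end{equation*}
with $K:=\sum_{\beta\neq\minull}\|P_\beta\|(2\N)^{-r\beta/2}$ finite precisely by Assumption \ref{hyp:P}(7), that is, by \eqref{eq:LKest}. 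Gronwall's lemma then bounds $M$ uniformly on $[0,T]$, so $T'=T$ is admissible. Uniqueness follows from the same recursion applied to the difference of two candidate solutions: its chaos coefficients satisfy the homogeneous deterministic Schr\"odinger problem with zero data and must vanish by Theorem \ref{T14craig}, inductively on $|\gamma|$. The main obstacle is exactly the convergence step just described, where the summability condition \eqref{eq:LKest} is what converts the stochastic Wick coupling between chaos levels into an $\ell^1$-type convolution that Young's inequality can handle.
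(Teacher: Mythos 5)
Your proposal is correct and, in its architecture, is essentially the paper's proof: chaos expansion into a triangular recursive system, Duhamel's formula with the propagator of Theorem \ref{T14craig}, and the $\ell^1$--$\ell^2$ Young inequality on $\caI$ driven by the summability hypothesis \eqref{eq:LKest}. The one genuine difference is the closing convergence step. The paper first takes $\sup_{t\in[0,T]}$ of each $\|u_\gamma(t)\|_{\Hzzeta}$, which discards the Volterra (time-ordered) structure of the Duhamel term; the resulting self-referential bound $\Sigma\le\wtC M_I+\wtC(M_LT')^2\,\Sigma$ can only be absorbed after shrinking $T$ to a $T'$ with $\wtC(M_LT')^2<1$. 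You keep the time variable, apply Minkowski's integral inequality and Young's inequality at each fixed $s$ to get the integral inequality for $M(t)$, and close with Gronwall; this gives $M(t)\le\|u_0\|_{\Hzzeta\otimes(S)_{-1,-r}}\,e^{(C_{z,\zeta}+K)t}$ on all of $[0,T]$, i.e.\ $T'=T$ --- slightly stronger than what the theorem asserts (the paper's version could also be iterated on subintervals to recover this, since its smallness threshold for $T'$ is independent of the data). Two minor points: both your Gronwall step and the paper's absorption step tacitly assume the weighted sum is a priori finite, which is justified by running the estimate over finite, downward-closed subsets of $\caI$ and passing to the limit by monotone convergence; and the sign in your Duhamel formula ($+i$ versus the paper's $-i$) is immaterial to the norm estimates.
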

\begin{proof}
	Employing \eqref{eq:Paction}, and
	writing $\displaystyle u_0=\sum_{\gamma\in\caI}u_{0\gamma}H_\gamma$, $u_{0\gamma}\in\Hzzeta$,
	we obtain an infinite dimensional system equivalent to \eqref{eq:SPDE}:
\begin{align*}
	[-i\partial_t + P_{\minull}]u_{\minull} & =  0,\qquad u_{\minull}(0)=u_{0,\minull}	, \mbox{ for } \gamma=\minull\\
	[-i\partial_t + P_{\minull}]u_{\gamma} & =  - \sum_{0\leq\lambda<\gamma}P_{\gamma-\lambda}u_\lambda,\qquad u_{\gamma}(0)=u_{0\gamma}	, \mbox{ for } \gamma\in\mathcal I\setminus\minull.
\end{align*}	
Their solutions are given by
	\begin{equation}\label{eq:syssol}
		u_\gamma(t) = S(t)u_{0\gamma}-
		i\int_0^t S(t-s)\left[\sum_{0\leq\lambda<\gamma}P_{\gamma-\lambda}u_\lambda(s)\right]ds, \quad t\in[0,T], \gamma\in\mathcal{I},
	\end{equation}
	where $S(t)$ depends only on $P_{\minull}=:P$ and has the property stated in Remark \ref{rem:S}.
	Notice that, by the regularity of the solutions 
	and the fact that all operators $P_\delta$ with $\delta\not=\minull$ 
	are in $\caO(0,0)$, Theorem \ref{thm:sobcont} implies that
	for each $\delta\in\mathcal I$, $\delta\not=\minull$, there exists a constant $K_\delta>0$ such that, for all $\lambda\in\mathcal I$,
	\[
		\|P_\delta u_\lambda(t)\|_{\Hzzeta} \leq  K_\delta \| u_\lambda(t)\|_{\Hzzeta}, 
		\quad t\in [0,T],
	\]
	By \eqref{eq:syssol}, with some other constant $C>0$, depending only on $P,z,\zeta,T,d$,
	\[
		\| u_\gamma\|_{C([0,T], \Hzzeta)} \le C\left(\|u_{0\gamma}\|_{\Hzzeta}+
		T\left\|\sum_{0\leq\lambda<\gamma}P_{\gamma-\lambda}u_\lambda\right\|_{C([0,T],\Hzzeta)}\right).
	\]
	Thus, for a new constant $\wtC>0$,
	\[
	\sum_{\gamma\in\mathcal I}\| u_\gamma\|_{C([0,T], \Hzzeta)}^2 \,
	(2\mathbb N)^{-r\gamma}
	\leq  \wtC \sum_{\gamma\in\mathcal I}
	\left[\|u_{0\gamma}\|^2_{\Hzzeta}
	+T^2 \left(
	\sum_{0\leq\lambda<\gamma}
	K_{\gamma-\lambda}\|u_\lambda\|_{C([0,T],\Hzzeta)}\right)^2
	\right](2\mathbb N)^{-r\gamma}.
	\]
	By the assumption $u_0\in\Hzzeta(\R^d)\otimes (S)_{-1,-r}$, we observe that
	\[
	M_I=\sum_{\gamma\in\mathcal I}\|u_{0\gamma}\|^2_{\Hzzeta}(2\mathbb N)^{-r\gamma}<\infty.
	\]
	Moreover, by immediate estimates, we obtain
	\begin{align*}
	\sum_{\gamma\in\mathcal I}&\left(
	\sum_{0\leq\lambda<\gamma}
	K_{\gamma-\lambda}\|u_\lambda\|_{C([0,T],\Hzzeta)}\right)^2(2\mathbb N)^{-r\gamma}
	=\sum_{\gamma\in\mathcal I}\left(
	\sum_{0\leq\lambda<\gamma}
	K_{\gamma-\lambda}(2\mathbb N)^{-\frac{r(\gamma-\lambda)}{2}}
	\|u_\lambda\|_{C([0,T],\Hzzeta)}(2\mathbb N)^{-\frac{r\lambda}{2}}\right)^2
	\\
	&\leq \left[\sum_{\delta\in\mathcal I\atop\delta\neq\minull}K_{\delta} (2\mathbb N)^{-\frac{r}{2}\delta}\right]^2
	\sum_{\gamma\in\mathcal I}\|u_\gamma\|^2_{C([0,T],\Hzzeta)}
	(2\mathbb N)^{-r\gamma}
	 \le M_L^2\sum_{\gamma\in\mathcal I}\|u_\gamma\|^2_{C([0,T],\Hzzeta)}
	(2\mathbb N)^{-r\gamma},
	\end{align*}
	where, by \eqref{eq:LKest},
	\[
		M_L= \sum_{\delta\in\mathcal I\atop\delta\neq\minull} K_{\delta} (2\mathbb N)^{-\frac{r}{2}\delta}<\infty.
	\]
	Then, after reducing $T$ to $T^\prime \in(0,T]$, we see that 
	\[
		\|u\|^2_{C([0,T^\prime],\Hzzeta(\R^d))\otimes (S)_{-1,-r}}
		\leq \frac{\wtC M_I}{1-\wtC(M_L T^\prime)^2} \,.
	\]
	The proof is complete.
\end{proof}
We observe that the solution exhibits the unbiasedness property, that is, its expectation coincides with the solution of the associated PDE 
obtained by taking expectations of all stochastic elements in \eqref{eq:SPDE}.

%

\section{Solutions of semilinear magnetic stochastic Schr\"odinger equations on $\R^d$}\label{sec:globsolsemilin}
We first introduce a class of maps on the solution spaces of \eqref{eq:SPDE}, similar to those appearing in \cite{ACS19b}.
\begin{definition}\label{def:lip} We say that a function 
$g:[0,T]\times\R^d\times(\HzzetaRd\otimes (S)_{-1,-r})\longrightarrow\HzzetaRd\otimes (S)_{-1,-r}$ belongs to the space $\Lip(z,\zeta)$, for chosen 
$z\in\N$, $\zeta\in[0,+\infty)$, if there exists a real valued and non-negative function $C_t=C(t)\in C([0,T])$ such that:
\begin{itemize}
\item for any $v\in \HzzetaRd\otimes (S)_{-1,-r}$, $t\in[0,T]$, we have 
\[
  \|g(t,\cdot,v)\|_{\HzzetaRd\otimes (S)_{-1,-r}}\leq C(t)\left[1+\|v\|_{\HzzetaRd\otimes (S)_{-1,-r}}\right];
\]
\item for any $v_1,v_2\in \HzzetaRd\otimes (S)_{-1,-r} $, $t\in[0,T]$, we have 
\[
	\|g(t,\cdot,v_1)-g(t,\cdot,v_2)\|_{\HzzetaRd\otimes (S)_{-1,-r}}\leq C(t)\|v_1-v_2\|_{\HzzetaRd\otimes (S)_{-1,-r}}.
\]
\end{itemize}
%
If the properties above are true only for $v,v_1,v_2\in U$,  with $U$ an open subset of $\HzzetaRd\otimes (S)_{-1,-r}$, then we say that $g\in\Liploc(z,\zeta)$. 
\end{definition}
\begin{remark}
	\begin{enumerate}
		\item[(i)] In applications, the open subset $U$ in Definition \ref{def:lip} 
			is usually a suitably small neighbourhood of $u_0$ in \eqref{eq:SPDE}.	
		\item[(ii)] Recall that $\Hzzeta$ is an algebra for $z,\zeta\in\N$, $\zeta>d/2$, since this is true for 
			$H^{s,\sigma}$, $s\ge0$, $\sigma>d/2$, and so is, obviously, $C([0,T], \Hzzeta)$. However, 
			this does not hold true for the solution space $C([0,T], \Hzzeta)\otimes(S)_{-1,-r}$. The reason for this is that the Wick product of two elements does not stay on the same level, e.g. if $F,G\in (S)_{-1,-p}$ then $F\lozenge G\in(S)_{-1,-2p-2}$, see \cite{HOUZ}. So, while $(S)_{-1}$ is an algebra, unfortunately $(S)_{-1,-p}$ for fixed $p$ is not, and the fixed point iteration needs a mapping of a Hilbert space into itself.		
			 Then, to treat
			nonlinearities of type $u^{\lozenge n}$ we will need a different approach, see Section \ref{sec:wickp} below.
	\end{enumerate}
\end{remark}
\begin{remark} 
Some operators that are of Lipschitz class in sense of Definition \ref{def:lip} would be coordinatewise stochastic operators,
that is, operators $G:\HzzetaRd\otimes(S)_{-1,-r}\rightarrow \HzzetaRd\otimes(S)_{-1,-r}$ that are composed of a family of deterministic operators $G_\alpha$, $\alpha\in\caI$, each one of Lipschitz class (either uniformly Lipschitz or their Lipschitz constants $L_\alpha$ satisfying certain growth rate), acting in the following manner:
$$G(u)=G(\sum_{\alpha\in\caI}u_\alpha H_\alpha) = \sum_{\alpha\in\caI}G_\alpha(u_\alpha) H_\alpha.$$ 
Indeed, for $v_1,v_2\in \HzzetaRd\otimes(S)_{-1,-r}$ we have
$$\| G(v_1)-G(v_2)\|^2_{\HzzetaRd\otimes(S)_{-1,-r}}\leq \sum_{\alpha\in\caI} \|G_\alpha(v_{1\alpha})-G_\alpha(v_{2\alpha})\|^2_{\HzzetaRd}(2\mathbb N)^{-r\alpha}\leq \sum_{\alpha\in\caI} L_\alpha^2
\|v_{1\alpha}-v_{2\alpha}\|^2_{\HzzetaRd}(2\mathbb N)^{-r\alpha}.$$
Now, if there is $L>0$ such that $L_\alpha\leq L$, $\alpha\in\caI$, or if $L:=\sum_{\alpha\in\caI} L_\alpha^2<\infty$, then one can easily obtain that
$$\| G(v_1)-G(v_2)\|^2_{\HzzetaRd\otimes(S)_{-1,-r}}\leq L \sum_{\alpha\in\caI}  \|v_{1\alpha}-v_{2\alpha}\|^2_{\HzzetaRd}(2\mathbb N)^{-r\alpha} = L \| v_1-v_2\|^2_{\HzzetaRd\otimes(S)_{-1,-r}}.$$
\end{remark}	
\begin{assumption}\label{hyp:F}
	Let $\bF$ in the right-hand side of \eqref{eq:SPDE} satisfy $\bF\in\Liploc(z,\zeta)$ on an open subset $U\subseteq \Hzzeta(\R^d)\otimes (S)_{-1,-r}$, for fixed $z,\zeta\in\N$, and $r\geq0$.
\end{assumption}
\begin{theorem} For fixed $z,\zeta\in\N$, let $\bP$ and $\bF$ in \eqref{eq:SPDE} satisfy Assumptions \ref{hyp:P} and \ref{hyp:F}, respectively. 
	%
	Assume also $u_0\in U$. 
	Then, there exists a time-horizon $T^\prime\in(0,T]$ such that \eqref{eq:SPDE} admits a unique solution 
	in $C([0,T^\prime],\HzzetaRd)\otimes(S)_{-1,-r}$. 
\end{theorem}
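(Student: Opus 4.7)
The plan is to reduce the semilinear Cauchy problem to a fixed-point equation via a Duhamel representation built on the linear theory from Theorem \ref{thm:mainglob}, and to invoke the Banach contraction principle on a closed ball contained in $U$.

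\smallskip

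\textbf{Step 1: Inhomogeneous linear estimate.} I would first upgrade the estimate behind Theorem \ref{thm:mainglob} to the linear problem with source. Given $f\in C([0,T],\Hzzeta(\R^d))\otimes(S)_{-1,-r}$, expanding $f=\sum_{\gamma\in\caI}f_\gamma H_\gamma$ yields the coupled system
\begin{equation*}
[-i\partial_t+P_{\minull}]v_\gamma=-\sum_{0\le\lambda<\gamma}P_{\gamma-\lambda}v_\lambda+f_\gamma,\qquad v_\gamma(0)=u_{0\gamma},
\end{equation*}
whose Duhamel solutions read
\begin{equation*}
v_\gamma(t)=S(t)u_{0\gamma}-i\int_0^t S(t-s)\Bigl[\sum_{0\le\lambda<\gamma}P_{\gamma-\lambda}v_\lambda(s)-f_\gamma(s)\Bigr]ds.
\end{equation*}
Running exactly the argument in the proof of Theorem \ref{thm:mainglob}, with the additional summand $f_\gamma$ bounded using $\sum_\gamma\|f_\gamma\|^2_{\Hzzeta}(2\mathbb N)^{-r\gamma}$, produces, for $T'\in(0,T]$ small enough,
\begin{equation*}
\|v\|_{C([0,T'],\Hzzeta)\otimes(S)_{-1,-r}}\le K\Bigl[\|u_0\|_{\Hzzeta\otimes(S)_{-1,-r}}+T'\|f\|_{C([0,T'],\Hzzeta)\otimes(S)_{-1,-r}}\Bigr],
\end{equation*}
with $K>0$ depending only on $\bP$, $T$, $z$, $\zeta$, $d$. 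Denote the linear solution operator by $\Psi(u_0,f):=v$.

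\smallskip

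\textbf{Step 2: Fixed-point setup.} Since $u_0\in U$ is open in $\Hzzeta(\R^d)\otimes(S)_{-1,-r}$, pick $R>0$ with $\overline{B_R(u_0)}\subset U$ and introduce the complete metric space
\begin{equation*}
X_{T'}:=\bigl\{u\in C([0,T'],\Hzzeta(\R^d)\otimes(S)_{-1,-r})\ :\ \sup_{t\in[0,T']}\|u(t)-u_0\|_{\Hzzeta\otimes(S)_{-1,-r}}\le R\bigr\},
\end{equation*}
endowed with the $C([0,T'],\Hzzeta\otimes(S)_{-1,-r})$-norm. Define $\Phi u:=\Psi\bigl(u_0,\bF(\cdot,\cdot,u)\bigr)$. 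Assumption \ref{hyp:F} (local Lipschitz with constant $C_{\max}:=\|C(\cdot)\|_{L^\infty([0,T])}$ on $U$) ensures that $u\mapsto \bF(\cdot,\cdot,u)$ maps $X_{T'}$ continuously into $C([0,T'],\Hzzeta\otimes(S)_{-1,-r})$, with linear growth and Lipschitz bound $C_{\max}$.

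\smallskip

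\textbf{Step 3: Stability and contraction.} By Step 1 and the linear growth of $\bF$,
\begin{equation*}
\|\Phi u-u_0\|\le\|\Psi(u_0,0)-u_0\|+K\,T'\,C_{\max}\,(1+\|u_0\|+R),
\end{equation*}
and $\Psi(u_0,0)\to u_0$ in $C([0,T'],\Hzzeta\otimes(S)_{-1,-r})$ as $T'\to 0^+$, because $\Psi(u_0,0)$ is the linear solution from Theorem \ref{thm:mainglob} which is continuous in $t$ and coincides with $u_0$ at $t=0$. Hence, for $T'$ small enough, $\Phi$ stabilizes $X_{T'}$. For $u_1,u_2\in X_{T'}$, linearity of $\Psi$ in the second argument together with Step 1 and the local Lipschitz property of $\bF$ yield
\begin{equation*}
\|\Phi u_1-\Phi u_2\|\le K\,T'\,C_{\max}\,\|u_1-u_2\|,
\end{equation*}
so shrinking $T'$ further turns $\Phi$ into a strict contraction on $X_{T'}$.

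\smallskip

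\textbf{Step 4: Conclusion.} The Banach contraction principle delivers a unique fixed point $u\in X_{T'}$, which, by construction of $\Psi$, solves \eqref{eq:SPDE} on $[0,T']$; uniqueness in $U$ follows from the contraction bound applied to any two solutions.

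\smallskip

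\textbf{Main obstacle.} The delicate point is Step 1: one must re-examine the energy estimate of the proof of Theorem \ref{thm:mainglob} to track that (a) the inhomogeneous term enters linearly with an explicit $T'$-prefactor, and (b) the weighted summation $\sum_{\gamma\in\caI}(\cdot)(2\mathbb N)^{-r\gamma}$ still closes on $\Hzzeta(\R^d)\otimes(S)_{-1,-r}$ after incorporating $f_\gamma$, so that the same threshold argument that produced the finite norm bound in Theorem \ref{thm:mainglob} provides the continuity of $\Psi$ in both arguments with constants uniform in $\gamma$. Once this inhomogeneous linear Duhamel estimate is in place, the contraction argument of Steps 2--4 is routine.
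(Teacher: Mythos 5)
Your proposal is correct and follows essentially the same route as the paper: both reduce the semilinear problem to a Duhamel-type integral equation built on the linear theory of Theorem \ref{thm:mainglob} and apply the Banach contraction principle on a small time interval inside $U$, using the local Lipschitz property of $\bF$. The only cosmetic difference is that the paper packages the linear theory as a propagator $\frS(t)$ with $\frS(0)=I$ and writes $\caT u(t)=\frS(t)u_0+\int_0^t\frS(t-s)\bF(s,\cdot,u(s))\,ds$ directly, whereas you re-run the chaos-expansion energy estimate to define an inhomogeneous solution operator $\Psi(u_0,f)$ — these coincide by Duhamel's formula, so your Step 1 is exactly the content the paper extracts from the proof of Theorem \ref{thm:mainglob}.
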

\begin{proof}
Notice that, in Theorem \ref{thm:mainglob}, we have proved the existence
of a fundamental solution operator for $\bL$, namely, $\frS(t)\colon \Hzzeta\otimes (S)_{-1,-r} \to \Hzzeta\otimes (S)_{-1,-r}\colon u_0\mapsto u(t)=\frS(t)u_0$, $u(t)$ the solution of \eqref{eq:SPDE} with initial datum $u_0$ and $\bF\equiv0$, $t\in[0,T^\prime]$. Notice also that, by the argument in the proof of Theorem \ref{thm:mainglob}, it also follows that $\frS$ is a continuous, uniformly bounded family of operators in $\caL(\Hzzeta\otimes (S)_{-1,-r}, \Hzzeta\otimes (S)_{-1,-r})$, such that $\frS(0)=I$, the identity operator. Then, the semilinear version of \eqref{eq:SPDE} is equivalent to the integral equation
\begin{equation}\label{eq:SPDEint}
	u(t)=\frS(t)u_0+\int_0^t \frS(t-s)\,\bF(s,\cdot,u(s))\,ds.
\end{equation}
We will show that, by the continuity of $\frS$ and the hypotheses, possibily after further reducing $T^\prime\in(0,T]$,
the right-hand side of \eqref{eq:SPDEint} is a strict contraction from $C([0,T^\prime], \Hzzeta)\otimes (S)_{-1,-r}$ to itself, which will prove the claim.
Indeed, let, for $u\in C([0,T^\prime], \Hzzeta)\otimes (S)_{-1,-r}$,
\[
	(\caT u)(t)= \frS(t)u_0+\int_0^t \frS(t-s)\,\bF(s,\cdot,u(s))\,ds.
\]
Then, by the hypotheses on $\bF$, setting $M_\frS= \max_{t\in[0,T^\prime]}\|\frS(t)\|_{\caL(\Hzzeta\otimes (S)_{-1,-r}, \Hzzeta\otimes (S)_{-1,-r})}$, $M_C= \max_{t\in[0,T]}C(t)$, we see that:
\begin{enumerate}
	\item[(i)] for any $T^\prime\in(0,T]$, $u\in C([0,T^\prime], \Hzzeta)\otimes (S)_{-1,-r}$, we have
	$\caT u\in C([0,T^\prime], \Hzzeta)\otimes (S)_{-1,-r}$; indeed, 
	\begin{align*}
		\|\caT u\|_{C([0,T^\prime], \Hzzeta)\otimes (S)_{-1,-r}}
		&\le
		M_\frS\|u_0\|_{\Hzzeta\otimes (S)_{-1,-r}}
		\\
		&+ 
		M_\frS\left[1+\|u\|_{C([0,T^\prime],\Hzzeta)\otimes (S)_{-1,-r})}\right]
		\int_0^{T^\prime} C(s)\,ds <+\infty;
	\end{align*}
	\item[(ii)] there exists $T^\prime\in(0,T]$ such that, for any $t\in[0,T^\prime]$, $u(t)\in U\Rightarrow \caT u(t)\in U$; in fact, there exists $\rho>0$ such that
	$\|v-u_0\|<\rho\Rightarrow v\in U$ and, for a suitable $T^\prime\in(0,T]$, for any $t\in[0,T^\prime]$, $u(t)\in U$,
	\begin{align*}
		\|\caT u(t)-u_0\|_{\Hzzeta\otimes (S)_{-1,-r}}&\le \|[\frS(t)-I]u_0\|_{\Hzzeta\otimes (S)_{-1,-r}} + M_\frS\int_0^{T^\prime}C(s)\left[1+\|u(s)\|_{\Hzzeta\otimes (S)_{-1,-r}}\right]ds
		\\
		&\le \|\frS(t)-I\|_{\caL(\Hzzeta\otimes (S)_{-1,-r}, \Hzzeta\otimes (S)_{-1,-r})} \|u_0\|_{\Hzzeta\otimes (S)_{-1,-r}}
		\\
		&+ M_\frS \,M_C\,(1+\rho+\|u_0\|_{\Hzzeta\otimes (S)_{-1,-r}})T^\prime
		\\
		&<\rho,
	\end{align*}
	by the continuity of $\frS(t)$ and $\frS(0)=I$, choosing $T^\prime\in(0,T]$ small enough;
	\item[(iii)] there exists $L>0$ such that, for any $u,v\in C([0,T^\prime], \Hzzeta\otimes (S)_{-1,-r})$, $u(t),v(t)\in U$, $t\in[0,T^\prime]$,
	\[
		\|\caT u - \caT v\|_{C([0,T^\prime], \Hzzeta\otimes (S)_{-1,-r}}\le 
		(LT^\prime)\|u-v\|_{C([0,T^\prime], \Hzzeta\otimes (S)_{-1,-r}};
	\]
	indeed,
	\begin{align*}
		\|(\caT u - \caT v)(t)\|_{\Hzzeta\otimes (S)_{-1,-r}}
		&\le
		M_\frS
		\int_0^{T^\prime}
		\| 
		\bF(s,\cdot, u(s))-\bF(s,\cdot,v(s))
		\|_{\Hzzeta\otimes (S)_{-1,-r}}
		\,ds
		\\
		&\le M_\frS
		\int_0^{T^\prime}C(s)
		\|u(s)-v(s)\|_{\Hzzeta\otimes (S)_{-1,-r}}\,ds
		\\
		&\le 
		M_\frS
		\|u-v\|_{C([0,T^\prime],\Hzzeta)\otimes (S)_{-1,-r})}
		\int_0^{T^\prime} C(s)\,ds
		\\
		&\Rightarrow
		\\
		\|\caT u - \caT v\|_{C([0,T^\prime],\Hzzeta)\otimes (S)_{-1,-r}}
		&\le
		(M_\frS\,M_C) \,T^\prime\,\|u-v\|_{C([0,T^\prime],\Hzzeta)\otimes (S)_{-1,-r})}.
	\end{align*}
\end{enumerate}
The proof is complete.
\end{proof}

%

\section{Wick-product nonlinearities}\label{sec:wickp}
Here we deal with the case of a diffusion term $\bF$ that is of non-Lipschitz type, but noteworthy and important from the physical point of view, namely, a power-nonlinearity of the form $\bF(u)=u^{\lozenge n}$, $n\in\mathbb N$. For technical simplicity we will fully elaborate only the case of  $n=2$, which is illustrative and already demands a fair piece of juggling with estimates related to Catalan numbers. 
Notice that the same procedure can be applied to higher order powers or even be adopted to polynomial nonlinearities (see \cite{Milica2}).
Beyond such Wick-type nonlinearities, one can explore nonlinearities in the form of Wick versions of analytic functions (see \cite{LPSZ23}).

Hence, the equation under consideration is now
\begin{equation}\label{wick2EQ}
-i\partial_t\;u +\bP\lozenge u +\lambda\;u^{\lozenge 2} = 0
\end{equation}
with suitable initial condition. Here, $\lambda>0$ refers to a repulsive nonlinearity, and $\lambda<0$ refers to an attractive nonlinearity, respectively.

\begin{remark}\label{rem:wickprphys}
The Wick product has received some criticisms about its physical feasibility (see, e.g., \cite{KL}), in particular, for not capturing the property of probabilistic independence.
However, it is closely related to the notion of renormalization in quantum physics, and represents the highest order approximation of the ordinary product (while some better 
approximations may be achieved in the framework of Malliavin derivatives). Hence, in cases of generalized stochastic processes, where the ordinary product is ill-defined, 
the Wick product represents a meaningful choice to model multiplication operators or other nonlinearities in the model equations (see, e.g., \cite{VWMRK13}).
\end{remark}

\medskip

Note that the chaos expansion representation of the Wick-square is given by
\begin{align}\label{WP2}
	u^{\lozenge 2}(t,x;\omega)&= \sum_{\alpha \in \mathcal I} \Big( \sum_{\gamma\leq \alpha } \, u_\gamma(t,x) \,\, u_{\alpha-\gamma} (t,x)\Big) \, H_\alpha(\omega)\\\nonumber
	&=u^2_{\mathbf{0}}(t,x)\,H_{\mathbf{0}}(\omega)+\sum_{|\alpha|>0} \Big(2u_{\mathbf{0}}(t,x)\,u_\alpha(t,x)+ \sum_{0<\gamma< \alpha } \, u_\gamma(t,x) \,\, u_{\alpha-\gamma} (t,x)\Big) \, H_\alpha(\omega),
\end{align}
where $t\in[0,T],x\in\mathbb R^d$, $\omega\in\Omega.$ For notational convenience below, from here on we denote $\mathbf{0}=\minull$.

\medskip

Equation \eqref{wick2EQ} is now equivalent to an infinite system of (deterministic Cauchy problems associated with evolution) PDEs, namely:
 	\begin{enumerate}
	\item[ i)] for $\alpha =\mathbf{0}$,
	\begin{equation}
		\label{nelinearna det-2}
		-i\partial_t u_{\mathbf{0}} (t,x) + P_{\mathbf{0}}(x,D_x) u_{\mathbf{0}}  (t,x) +  \lambda u^2_{\mathbf{0}} (t,x) = 0, \quad u_{\mathbf{0}}(0,x) = u_{\mathbf{0}}^0(x);
	\end{equation}
	\item[ii)] for  $\alpha >\mathbf{0}$,
	\begin{equation}
		\label{sistem 2-2}
		\Big(-i\partial_t +  P_{\mathbf{0}}(x,D_x) + 2\lambda u_{\mathbf{0}} (t,x) \Big) \, u_\alpha(t,x) + \sum_{\mathbf{0} < \gamma <  \alpha } \,P_\gamma(x) u_{\alpha-\gamma}(t,x)  + \lambda \sum_{\mathbf{0} < \gamma <  \alpha } \, u_\gamma(t,x) \,\, u_{\alpha-\gamma}(t,x) = 0, \quad  
		u_\alpha (0,x) =  u_\alpha^0 (x).
	\end{equation} 
\end{enumerate}
In all the equations \eqref{nelinearna det-2}-\eqref{sistem 2-2} of the system we have $t\in (0,T]$, $x\in\mathbb R^d$, $\omega\in\Omega$. The system \eqref{sistem 2-2} should be solved recursively on the length of $\alpha$. In each step, the solutions of the previous ones appear in the non-homogeneous part, while the operator is the same for each $\alpha >\mathbf{0}$.

\medskip

Note that in \eqref{sistem 2-2} we have a new operator (a perturbation of the original one by $u_{\mathbf{0}}$), that introduces a time-dependence into the potential term of the principal part. Let us denote this new operator as
\begin{equation}\label{opB}
	B(t,x, D_x) = P_{\mathbf{0}}(x,D_x) + 2\lambda u_{\mathbf{0}} (t,x),
\end{equation}
and let
$$g_\alpha(t,x) = \sum_{\mathbf{0} < \gamma <  \alpha } \,P_\gamma(x) u_{\alpha-\gamma}(t,x)  + \lambda \sum_{\mathbf{0} < \gamma <  \alpha } \, u_\gamma(t,x) \,\, u_{\alpha-\gamma}(t,x), \quad \alpha > \mathbf{0},$$
so that the system \eqref{sistem 2-2}  can be written in the form 
\begin{equation}
	\label{sistem 2a}
	-i\partial_t u_\alpha (t,x) +  B(t,x,D_x)  \, u_\alpha(t,x) +  g_{\alpha}(t,x) = 0, \quad  
	u_\alpha (0,x) =  u_\alpha^0(x), \quad \alpha> \mathbf{0} .  
\end{equation}

\begin{assumption}\label{hyp:Wick2} Assume that the following conditions hold:
	\begin{enumerate}
		\item the operator $\bP$ satisfies Assumption \ref{hyp:P} and, for fixed $z,\zeta\in\N$, there exists $r\geq 0$ such that 
		$\bP$ fulfills \eqref{eq:LKest};
		\item \label{hypA2} the initial value satisfies $u_0\in \Hzzeta(\R^d)\otimes (S)_{-1,-r}$;
		\item \label{hypA3} the deterministic nonlinear Cauchy problem \eqref{nelinearna det-2} with $u_{\mathbf{0}}^0=E(u_0)$ 
		has a classical solution $u_{\mathbf{0}}\in C([0,T],\Hzzeta)$. 
	\end{enumerate}
\end{assumption}

\begin{remark}\label{rem:B}
Note that, due to Assumptions \ref{hyp:Wick2},\eqref{hypA3}, and the fact that $\Hzzeta$ is an algebra, the new (time-perturbed) operator $B$ in \eqref{opB} will also generate an appropriate propagator system. Namely, as stated in Remark \ref{rem:S}, the operator $-i\partial_t+P_{\mathbf{0}}$ defines a stable family of infinitesimal generators $S(t)$ such that
$$\|S(t)\| \leq m e^{wt},\quad w=C_{z,\zeta}$$ holds. 
Denote
\begin{equation}\label{eq:M2}
	M_2= \sup_{t\in[0,T]}\|u_{\mathbf{0}}(t,x)\|_{\HzzetaRd}
\end{equation}
The perturbation is a multiplication operator, giving rise to a bounded linear operator $u_{\mathbf{0}}(t,x):\Hzzeta\rightarrow \Hzzeta$ such that
$$\|2\lambda u_{\mathbf{0}}(t,x)\cdot f(x)\|_{\Hzzeta} \leq 2|\lambda| \|u_{\mathbf{0}}(t,x)\|_{\Hzzeta} \| f(x)\|_{\Hzzeta} \leq 2|\lambda| M_2 \| f(x)\|_{\Hzzeta}.	
$$
Hence, $B(t,x,D_x)$ from \eqref{opB} will have a stable family of infinitesimal generators $\tilde S(t)$ such that
\begin{equation}\label{eq:w2}
	\|\tilde S(t)\|\leq m e^{(w+2|\lambda|M_2)t} = m e^{w_2t}, \mbox{ with } w_2=C_{z,\zeta}+2|\lambda|M_2,
\end{equation} 
holds for $t\in[0,T]$. The solution to each equation in \eqref{sistem 2a} will be given by
\begin{equation}\label{eq:resenjepert}
u_\alpha(t,x) = \tilde S(t) u_\alpha^0(x) -i \int_0^t \tilde S(t-s) g_\alpha(s,x)ds,\quad t\in[0,T].
\end{equation}
\end{remark}

\begin{remark}
Let $u_0\in \Hzzeta(\R^d)\otimes (S)_{-1,-r}$ be an initial condition satisfying Assumptions \ref{hyp:Wick2},\eqref{hypA2}. Then, there exists  $\tilde{K}>0$ such that $\sum_{\alpha\in\mathcal{I}}\|u^0_\alpha\|_{\Hzzeta}^2(2\mathbb{N})^{-\tilde{r}\alpha}=\tilde{K}$. There exists also $p\geq 0$ (possibly $p>>r$) and $K\in (0,1)$ such that $\sum_{\alpha\in \mathcal{I}}\|u^0_\alpha\|_{\Hzzeta}^2(2\mathbb{N})^{-2p\alpha}=K^2$, or, equivalently,
\begin{equation}\label{ocena p. uslova}
	\exists p\geq 0\;\exists K\in (0,1)\; \forall \alpha\in\mathcal{I} \quad \|u_\alpha^0\|_{\Hzzeta}\leq K(2\mathbb{N})^{p\alpha}.
\end{equation}

\medskip The same observation can be carried out to rewrite \eqref{eq:LKest}. Namely, there exist $P\in(0,1)$ and $q\geq0$ such that for all $\beta\in\caI\setminus\minull$ one has $\|P_\beta\|_{\caL(\, C([0,T],\Hzzeta(\R^d)),		\,
		C([0,T],\Hzzeta(\R^d)		\,)}\leq P (2\mathbb N)^{q\beta/2}.$
Without loss of generality (by taking maximums), we will assume that $K=P$ and $p=q/2$, hence
\begin{equation}\label{ocena op. uslova}
	\exists p\geq 0\;\exists K\in (0,1)\; \forall \beta\in\mathcal{I}\setminus \mathbf{0}
	\quad \|P_\beta\|_{\caL(\, C([0,T],\Hzzeta(\R^d)),		\,
		C([0,T],\Hzzeta(\R^d)		\,)} \leq K(2\mathbb{N})^{p\beta}.
\end{equation} 

\end{remark}

The next Theorem \ref{thm:main3} is the main result of this section.

\begin{theorem}\label{thm:main3}
Let Assumptions \ref{hyp:Wick2} be fulfilled. Then, there exists a unique solution 
$u\in C([0,T],\HzzetaRd)\otimes (S)_{-1}$ to the nonlinear stochastic equation \eqref{wick2EQ}.
\end{theorem}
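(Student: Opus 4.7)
The plan is to decompose \eqref{wick2EQ} via the chaos expansion into the infinite recursive system \eqref{nelinearna det-2}--\eqref{sistem 2a}, construct each coefficient $u_\alpha$ inductively on the length $|\alpha|$, and then verify that the resulting series converges in $C([0,T],\Hzzeta)\otimes(S)_{-1,-\tilde r}$ for some sufficiently large $\tilde r$. First I would invoke Assumption~\ref{hyp:Wick2}(3) to obtain $u_{\mathbf{0}}\in C([0,T],\Hzzeta)$; working with $\zeta>d/2$ so that $\Hzzeta$ is a Banach algebra (Remark~\ref{rem:Hpint}(i)), the zeroth coefficient defines a bounded, time-dependent multiplicative perturbation $2\lambda u_{\mathbf{0}}$ of $P_{\mathbf{0}}$, and by Remark~\ref{rem:B} the perturbed operator $B(t,x,D_x)$ generates a propagator $\tilde S(t)$ with $\|\tilde S(t)\|\le me^{w_2 t}$ on $\Hzzeta$. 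For every $\alpha>\mathbf{0}$, since $g_\alpha$ depends only on $u_\beta$ with $\beta<\alpha$, the Duhamel formula \eqref{eq:resenjepert} defines $u_\alpha\in C([0,T],\Hzzeta)$ uniquely, so existence and uniqueness at the level of individual coefficients are immediate.

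The analytic heart of the argument is summability. Combining \eqref{eq:resenjepert}, the algebra property of $\Hzzeta$, and the growth bounds \eqref{ocena p. uslova}--\eqref{ocena op. uslova}, I would derive a recursive estimate of the form
\[
\|u_\alpha\|_{C([0,T],\Hzzeta)}
\le C_0 K(2\mathbb N)^{p\alpha}
+C_0 K\!\!\sum_{\mathbf{0}<\gamma<\alpha}\!\!(2\mathbb N)^{p\gamma}\|u_{\alpha-\gamma}\|_{C([0,T],\Hzzeta)}
+C_0|\lambda|\!\!\sum_{\mathbf{0}<\gamma<\alpha}\!\|u_\gamma\|_{C([0,T],\Hzzeta)}\|u_{\alpha-\gamma}\|_{C([0,T],\Hzzeta)},
\]
for a constant $C_0$ depending only on $m,w_2,T$. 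The quadratic convolution is the source of the Catalan-type behavior announced in the remark preceding the theorem. The natural device is to introduce the weighted sequence $v_\alpha:=\|u_\alpha\|_{C([0,T],\Hzzeta)}(2\mathbb N)^{-s\alpha}$ with $s>p$; then the linear convolution becomes a geometric tail that can be absorbed and the quadratic term reduces to $\sum_{\gamma+\delta=\alpha}v_\gamma v_\delta$. A strong induction on $|\alpha|$ (or, equivalently, a multi-index generating-function argument for $\Phi(z)=\sum_\alpha v_\alpha z^\alpha$ subject to a quadratic functional inequality $\Phi\le A+b\Phi^2$) then yields a bound of Catalan-type $v_\alpha\le C_1 R^{|\alpha|}(2\mathbb N)^{-s\alpha}$, i.e.\ $\|u_\alpha\|_{C([0,T],\Hzzeta)}\le C_1 R^{|\alpha|}(2\mathbb N)^{(p-s+s)\alpha}\cdot(2\mathbb N)^{-s\alpha}\cdot(2\mathbb N)^{s\alpha}=C_1 R^{|\alpha|}$ up to the weight.

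Choosing $\tilde r$ large enough that $(2\mathbb N)^{-\tilde r\alpha}$ absorbs both $R^{2|\alpha|}$ and the combinatorial factor needed to sum $\sum_{\alpha\in\caI}(2\mathbb N)^{-q\alpha}$ (which is finite for $q>1$; see Appendix~\ref{sec:prel}) yields
\[
\sum_{\alpha\in\caI}\|u_\alpha\|^2_{C([0,T],\Hzzeta)}(2\mathbb N)^{-\tilde r\alpha}<\infty,
\]
so that $u\in C([0,T],\Hzzeta)\otimes(S)_{-1,-\tilde r}\subset C([0,T],\Hzzeta)\otimes(S)_{-1}$. Uniqueness is inherited from the uniqueness of $u_{\mathbf{0}}$ postulated in Assumption~\ref{hyp:Wick2}(3) and from the uniqueness in the Duhamel formula at each subsequent level. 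I expect the main obstacle to be the Catalan-type bound itself: the multi-index convolution $\sum_{\gamma<\alpha}\|u_\gamma\|\,\|u_{\alpha-\gamma}\|$ does not reduce immediately to a scalar convolution in $|\alpha|$, since the number of admissible splittings $\gamma+(\alpha-\gamma)$ depends on $\alpha$, not only on $|\alpha|$. One has to pass to the weighted variables $v_\alpha$ and exploit the multiplicative structure of $(2\mathbb N)^{-s\alpha}$ to disentangle indices, and keep constants and exponents sharp enough that a single $\tilde r$ works uniformly in $\alpha$; this is ultimately what forces the solution to live in $(S)_{-1}$ rather than in a fixed Kondratiev level, in agreement with the general fact that $\lozenge$ lowers the Kondratiev index.
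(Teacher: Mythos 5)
Your proposal follows essentially the same route as the paper: chaos expansion into the recursive system, Duhamel with the perturbed propagator $\tilde S(t)$ from Remark \ref{rem:B}, a quadratic recursive estimate on $\|u_\alpha\|_{C([0,T],\Hzzeta)}$, a Catalan-type majorant for the multi-index convolution, and summability in $(S)_{-1,-q}$ for $q$ large. The generating-function/weighted-majorant device you sketch for the key combinatorial step is exactly what the paper implements concretely via the recursion $R_\alpha=\sum_{\mathbf{0}<\gamma<\alpha}R_\gamma R_{\alpha-\gamma}$, the closed formula of Lemma \ref{multi katalan}, and the absorption of the splitting-count factor $|\alpha|!/\alpha!$ into the weight $(2\mathbb N)^{2\alpha}$ (Lemma \ref{ocena faktorijela}); the only cosmetic difference is that the paper absorbs the linear convolution term into the quadratic one by a worst-case dichotomy $L_\alpha\gtrless\sqrt{K}(2\mathbb N)^{p\alpha}$ rather than by your geometric-tail argument.
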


\begin{proof}
According to Assumption \ref{hyp:Wick2},\eqref{hypA3} and Remark \ref{rem:B}, each equation in the system \eqref{nelinearna det-2}-\eqref{sistem 2-2} has a unique solution $u_\alpha(t,x)\in C([0,T],\Hzzeta)$, $\alpha\in\caI$, given by $u_{\mathbf{0}}$ in Assumptions \ref{hyp:Wick2},\eqref{hypA3}, and $u_\alpha$ in \eqref{eq:resenjepert} for $\alpha>\mathbf{0}$. Set
\begin{align*}
	L_\alpha:=\sup_{t\in[0,T]}\|u_\alpha(t)\|_{\Hzzeta},\quad \alpha\in\mathcal{I}.
\end{align*}

For $\alpha=\mathbf{0}$, using \eqref{eq:M2} we have
\begin{equation}\label{ocena L0-2}
	L_\mathbf{0}=\sup_{t\in[0,T]}\|u_\mathbf{0}(t)\|_{\Hzzeta}=M_2.
\end{equation}

Let $|\alpha|=1.$ Then $\alpha=\varepsilon_k,\;k\in \mathbb{N}$, and using \eqref{eq:resenjepert} we have that
$$\|u_{\varepsilon_k} (t)\|_{\Hzzeta}\leq \|\tilde S(t)\|\| u_{\varepsilon_k}^0\|_{\Hzzeta} + \int_0^t\|\tilde S (t-s)\|\|g_{\varepsilon_k}(s)\|_{\Hzzeta} ds, \quad t\in [0,T],$$
with $g_{\varepsilon_k}(s) = P_{\varepsilon_k}u_\mathbf{0}(s) = m_{0,\varepsilon_k}(x,D_x)u_\mathbf{0}(s,x)$, that can be estimated by \eqref{ocena op. uslova} in the following manner:
$$\sup_{s\in[0,t]}\|g_{\varepsilon_k}(s)\|_{\Hzzeta} \leq \|P_{\varepsilon_k}\|\sup_{s\in[0,t]}\|u_\mathbf{0}(s) \|_{\Hzzeta} \leq K (2\mathbb N)^{p\varepsilon_k}M_2.$$
From \eqref{eq:w2} we obtain 
\begin{align}\label{ocen int}
	\int_0^t\|\tilde S(t-s)\|ds\leq\int_0^t me^{w_2(t-s)}ds =m\frac{e^{w_2 t}-1}{w_2}\leq \frac{m}{w_2}e^{w_2 T},\quad t\in[0,T],\quad\alpha>\mathbf{0},
\end{align} 
and now \eqref{eq:w2}, \eqref{ocena p. uslova} and \eqref{ocena op. uslova} imply that
\begin{align}\label{ocena L_epsilon}
	L_{\varepsilon_k}&=\sup_{t\in[0,T]}\|u_{\varepsilon_k} (t)\|_{\Hzzeta}\leq\sup_{t\in[0,T]}\Big\{\|\tilde S (t)\|\| u_{\varepsilon_k}^0\|_{\Hzzeta}+\sup_{s\in[0,t]}\|g_{\varepsilon_k}(s)\|_{\Hzzeta}\int_0^t\|\tilde S(t-s)\|ds\Big\}\\\nonumber
	&\leq me^{w_2T}K(2\mathbb{N})^{p\varepsilon_k} +\frac{m}{w_2}e^{w_2 T}K(2\mathbb{N})^{p\varepsilon_k}M_2=m_1e^{w_2T}K(2\mathbb{N})^{p\varepsilon_k},\quad t\in[0,T],\quad k\in\mathbb{N},
\end{align}
where $m_1=m+\frac{m}{w_2}M_2.$

For $|\alpha|>1$ we consider two possibilities for $L_\alpha.$ First, if $L_\alpha\leq \sqrt{K}(2\mathbb{N})^{p\alpha}$ for all $|\alpha|>1$, then the statement of the theorem follows directly, since, for $q>2p+1$, keeping in mind \eqref{ocena L0-2} and \eqref{ocena L_epsilon}, we obtain
\begin{align*}
	\sum_{\alpha\in\mathcal{I}}\sup_{t\in[0,T]}\|u_\alpha(t)\|^2_{\Hzzeta}(2\mathbb{N}&)^{-q\alpha}=\sum_{\alpha\in\mathcal{I}} L_\alpha^2(2\mathbb{N})^{-q\alpha}=L_\mathbf{0}^2+\sum_{k\in \mathbb{N}}L_{\varepsilon_k}^2(2\mathbb{N})^{-q\varepsilon_k} + \sum_{|\alpha|>1}L_\alpha^2(2\mathbb{N})^{-q\alpha}\\
	&\leq M_2^2+(m_1e^{w_2 T}K)^2\sum_{k\in\mathbb{N}}(2\mathbb{N})^{(2p-q)\varepsilon_k} + K\sum_{|\alpha|>1}(2\mathbb{N})^{(2p-q)\alpha}<\infty,
\end{align*}
that is, $u\in  C([0,T],{\Hzzeta})\otimes (S)_{-1,-q}.$

The second case is if $L_\alpha> \sqrt{K}(2\mathbb{N})^{p\alpha}$ for some $\alpha\in \mathcal{I},\;|\alpha|>1.$ In what follows, we will assume the worst-case scenario that $L_\alpha> \sqrt{K}(2\mathbb{N})^{p\alpha}$ for all $\alpha\in \mathcal{I},\;|\alpha|>1$, and prove that even under that growth rate one can find $q>p$ large enough such that $\sum_{\alpha\in\mathcal{I}} L_\alpha^2(2\mathbb{N})^{-q\alpha}<\infty$ will follow at the end.

\medskip

Let $\alpha,\;|\alpha|>1$ be fixed. From \eqref{eq:resenjepert} we obtain

$$u_\alpha(t)=\tilde S(t)u^0_\alpha - i\int_0^t \tilde S(t-s)\Big[\lambda \sum_{0<\gamma<\alpha}u_{\alpha-\gamma}(s)u_\gamma(s) + \sum_{0<\gamma<\alpha}P_{\alpha-\gamma}u_\gamma(s)\Big]ds,\quad t\in [0,T].$$
From this we have
\begin{align*}
	L_\alpha &=\sup_{t\in[0,T]}\|u_\alpha(t)\|_{\Hzzeta} \\
	& \leq \sup_{t\in[0,T]}\Bigg\{\|\tilde S(t)\|\|u^0_\alpha\|_{\Hzzeta}+|\lambda|\int_0^t \|\tilde S(t-s)\|\Big\|\sum_{0<\gamma<\alpha}u_{\alpha-\gamma}(s)u_\gamma(s)\Big\|_{\Hzzeta}ds\\
	&+\int_0^t\|\tilde S(t-s)\|\Big\|\sum_{0<\gamma<\alpha}P_{\alpha-\gamma}u_\gamma(s)\Big\|_{\Hzzeta} ds\Bigg\}\\
	&\leq \sup_{t\in[0,T]}\Bigg\{me^{w_2t}\|u^0_\alpha\|_{\Hzzeta}+|\lambda|\sup_{s\in[0,t]}\sum_{0<\gamma<\alpha}\|u_{\alpha-\gamma}(s)\|_{\Hzzeta}\|u_\gamma(s)\|_{\Hzzeta}\cdot\int_0^t \|\tilde S(t-s)\|ds\\
	&+\sup_{s\in[0,t]}\sum_{0<\gamma<\alpha}\|P_{\alpha-\gamma}\|\|u_\gamma(s)\|_{\Hzzeta}\int_0^t\|\tilde S(t-s)\|ds\Bigg\}.
\end{align*}

Using \eqref{ocen int}, recalling \eqref{ocena p. uslova}-\eqref{ocena op. uslova}, we obtain
\begin{align*}
	L_\alpha &=\sup_{t\in[0,T]}\|u_\alpha(t)\|_{\Hzzeta}  \\
	& \leq me^{w_2T}\|u^0_\alpha\|_{\Hzzeta} +|\lambda|\frac{m}{w_2}e^{w_2T}\sum_{0<\gamma<\alpha}\sup_{t\in[0,T]}\|u_{\alpha-\gamma}(t)\|_{\Hzzeta}\sup_{t\in[0,T]}\|u_\gamma(t)\|_{\Hzzeta} \\
	&+\frac{m}{w_2}e^{w_2T}\sum_{0<\gamma<\alpha}K(2\mathbb N)^{p(\alpha-\gamma)}\sup_{s\in[0,T]}\|u_\gamma(s)\|_{\Hzzeta}\\
	& \leq me^{w_2T}K(2\mathbb{N})^{p\alpha}+|\lambda|\frac{m}{w_2}e^{w_2T}\sum_{\mathbf{0}<\gamma<\alpha}L_{\alpha-\gamma}L_\gamma + \frac{m}{w_2}e^{w_2T}\sum_{\mathbf{0}<\gamma<\alpha}K(2\mathbb N)^{p(\alpha-\gamma)}L_\gamma.
\end{align*} 
Now, since we assumed $L_\gamma > \sqrt{K}(2\mathbb N)^{p\gamma}$ for all $\gamma>0$, and since $K\in(0,1)$, it follows that 
$$\sum_{\mathbf{0}<\gamma<\alpha}K(2\mathbb N)^{p(\alpha-\gamma)}L_\gamma < \sum_{\mathbf{0}<\gamma<\alpha}L_{\alpha-\gamma}L_\gamma.$$ Hence,
$$L_\alpha \leq me^{w_2T}K(2\mathbb{N})^{p\alpha}+(|\lambda|+1)\frac{m}{w_2}e^{w_2T}\sum_{\mathbf{0}<\gamma<\alpha}L_{\alpha-\gamma}L_\gamma.$$
Let $m_2= \max\Big\{m, m_1, (|\lambda|+1)\frac{m}{w_2}\Big\}$. For this constant now we have
\begin{align}\label{ocena L_alpha}
	L_\alpha\leq m_2e^{w_2T}\Big(K(2\mathbb{N})^{p\alpha}+\sum_{\mathbf{0}<\gamma<\alpha}L_{\alpha-\gamma}L_\gamma\Big),\qquad \alpha>\mathbf{0},
\end{align}
and \eqref{ocena L_epsilon} holds as well, with $m_1$ replaced by $m_2$.

\medskip 

Let $\tilde{L}_\alpha,\;\alpha>\mathbf{0},$ be given by
\begin{align*}
	\tilde{L}_{\alpha}:=2m_2e^{w_2 T}\Big(\frac{L_\alpha}{\sqrt{K}(2\mathbb{N})^{p\alpha}}+1\Big).
\end{align*}
Thus, from \eqref{ocena L_epsilon} we have that for all $k\in \mathbb{N}$
\begin{align}\label{ocena tilda L_alpha}
	\tilde{L}_{\varepsilon_k}=2m_2e^{w_2 T}\Big(\frac{L_{\varepsilon_k}}{\sqrt{K}(2\mathbb{N})^{p\varepsilon_k}}+1\Big)&\leq 2m_2e^{w_2 T}\Big(\frac{m_2e^{w_2 T}K(2\mathbb{N})^{p\varepsilon_k}}{\sqrt{K}(2\mathbb{N})^{p\varepsilon_k}}+1\Big)\\\nonumber
	&=2m_2e^{w_2 T}(m_2e^{w_2 T}\sqrt{K}+1).
\end{align}
We proceed with the estimation of the term $\sum_{\mathbf{0}<\gamma<\alpha}\tilde L_\gamma\tilde L_{\alpha-\gamma}$ for given $|\alpha|>1$:
\begin{align*}
	\sum_{\mathbf{0}<\gamma<\alpha}\tilde L_\gamma\tilde L_{\alpha-\gamma}&=\sum_{\mathbf{0}<\gamma<\alpha} (2m_2e^{w_2 T})^2\Big(\frac{L_{\gamma}}{\sqrt{K}(2\mathbb{N})^{p\gamma}}+1\Big)\Big(\frac{L_{\alpha-\gamma}}{\sqrt{K}(2\mathbb{N})^{p(\alpha-\gamma)}}+1\Big)\\
	&\geq (2m_2e^{w_2 T})^2 \Big(\sum_{\mathbf{0}<\gamma<\alpha}\frac{L_\gamma L_{\alpha-\gamma}}{K(2\mathbb{N})^{p\alpha}}+1\Big)\\
	&=\frac{(2m_2e^{w_2 T})^2}{K(2\mathbb{N})^{p\alpha}}\sum_{\mathbf{0}<\gamma<\alpha}L_\gamma L_{\alpha-\gamma}+(2m_2e^{w_2 T})^2.
\end{align*}
Using the estimate \eqref{ocena L_alpha} we obtain
\begin{align*}
	\sum_{\mathbf{0}<\gamma<\alpha}\tilde L_\gamma\tilde L_{\alpha-\gamma}&\geq \frac{(2m_2e^{w_2 T})^2}{K(2\mathbb{N})^{p\alpha}}\Big(\frac{L_\alpha}{m_2e^{w_2 T}}-K(2\mathbb{N})^{p\alpha}\Big)+(2m_2e^{w_2 T})^2=\frac{4m_2e^{w_2 T}}{K(2\mathbb{N})^{p\alpha}}L_\alpha.
\end{align*}
Now, since $L_\alpha>\sqrt{K}(2\mathbb{N})^{p\alpha}$ for $\alpha>\mathbf{0}$, and since  $K<1$, we obtain
\begin{align*}
	\sum_{\mathbf{0}<\gamma<\alpha}\tilde L_\gamma\tilde L_{\alpha-\gamma}&\geq \frac{4m_2e^{w_2 T}}{\sqrt{K}(2\mathbb{N})^{p\alpha}}L_\alpha=\frac{2m_2e^{w_2 T}}{\sqrt{K}(2\mathbb{N})^{p\alpha}}L_\alpha+\frac{2m_2e^{w_2 T}}{\sqrt{K}(2\mathbb{N})^{p\alpha}}L_\alpha\\
	&\geq 2m_2e^{w_2 T}\Big(\frac{L_\alpha}{\sqrt{K}(2\mathbb{N})^{p\alpha}}+1\Big)=\tilde{L}_\alpha.
\end{align*}
Hence, for all $\alpha\in\caI$, $|\alpha|>1$, we have finally proved
\begin{align*}
	\sum_{\mathbf{0}<\gamma<\alpha}\tilde L_\gamma\tilde L_{\alpha-\gamma}\geq \tilde{L}_\alpha. 
\end{align*}

\medskip

Let $R_\alpha,\alpha>\mathbf{0},$ be defined as follows:
\begin{align*}
	R_{\varepsilon_k}&=\tilde{L}_{\varepsilon_k},\quad k\in\mathbb{N},\\
	R_\alpha&=\sum_{\mathbf{0}<\gamma<\alpha}R_\gamma R_{\alpha-\gamma},\quad |\alpha|>1.
\end{align*}
It is a direct consequence of the definition of the numbers $R_\alpha,\alpha>\mathbf{0},$ and it can be shown by induction with respect to the length of the multi-index $\alpha>\mathbf{0}$, that (see \cite[Section 5]{KL})
\begin{align}\label{L manje od R}
	\tilde{L}_\alpha\leq R_{\alpha},\quad \alpha>\mathbf{0}.
\end{align}
Lemma \ref{multi katalan} shows that the numbers $R_\alpha,\;\alpha>\mathbf{0}$, satisfy
\begin{align*}
	R_\alpha=\frac{1}{|\alpha|}\binom{2|\alpha|-2}{|\alpha|-1}\frac{|\alpha|!}{\alpha!}\prod_{i=1}^\infty R_{\varepsilon_i}^{\alpha_i},\quad \alpha>\mathbf{0}.
\end{align*}
By virtue of \eqref{ocena tilda L_alpha},
$$\prod_{i=1}^\infty R_{\varepsilon_i}^{\alpha_i}=\prod_{i=1}^\infty \tilde{L}_{\varepsilon_i}^{\alpha_i}\leq \prod_{i=1}^\infty (2m_2e^{w_2 T}(m_2e^{w_2 T}\sqrt{K}+1))^{\alpha_i}.$$
Let $c=2m_2e^{w_2 T}(m_2e^{w_2 T}\sqrt{K}+1).$ Then
\begin{align}\label{ocena R}
	R_\alpha\leq \mathbf{c}_{|\alpha|-1}\frac{|\alpha|!}{\alpha!}c^{|\alpha|},\quad \alpha >\mathbf{0},
\end{align}
where $\mathbf{c}_n=\frac{1}{n+1}\binom{2n}{n},\;n\geq 0$, denotes the $n$th Catalan number (more information on Catalan numbers is provided in Lemma \ref{katalan}). Using Lemma \ref{ocena faktorijela}, \eqref{L manje od R}, \eqref{ocena R} and \eqref{Cat ocena} we obtain that, for $\alpha\in\mathcal I$, $|\alpha|>1$, the estimation
\begin{align*}
	\tilde{L}_\alpha\leq R_\alpha \leq 4^{|\alpha|-1}(2\mathbb{N})^{2\alpha}c^{|\alpha|}
\end{align*} holds.
Finally, from the definition of $\tilde L_\alpha,\;\alpha>\mathbf{0}$, we obtain
\begin{align*}
	L_\alpha\leq \Big(\frac{4^{|\alpha|-1}(2\mathbb{N})^{2\alpha}c^{|\alpha|}}{2m_2e^{w_2 T}}-1\Big)\sqrt{K}(2\mathbb{N})^{p\alpha}\leq \frac{\sqrt{K}}{8m_2e^{w_2 T}}(4c)^{|\alpha|}(2\mathbb{N})^{(p+2)\alpha}.
\end{align*}

\medskip

Now we can finally prove that $u(t,x;\omega)=\sum_{\alpha\in \mathcal{I}}u_\alpha(t,x)H_\alpha(\omega)\in C([0,T],\Hzzeta)\otimes (S)_{-1}.$ Denote by $H=\frac{\sqrt{K}}{8m_2e^{w_2 T}}.$ Then,
\begin{align*}
	\sum_{\alpha\in\mathcal{I}}\sup_{t\in[0,T]}&\|u_\alpha(t)\|^2_{\Hzzeta}(2\mathbb{N})^{-q\alpha}=\sup_{t\in[0,T]}\|u_{\mathbf{0}}(t)\|^2_{\Hzzeta}+\sum_{\alpha>\mathbf{0}}\sup_{t\in[0,T]}\|u_\alpha(t)\|^2_{\Hzzeta}(2\mathbb{N})^{-q\alpha}\\
	&=M_2^2+\sum_{k\in\mathbb{N}}L_{\varepsilon_k}^2(2\mathbb{N})^{-q\varepsilon_k}+\sum_{|\alpha|>1}L_\alpha^2(2\mathbb{N})^{-q\alpha}\\
	&\leq M_2^2+(m_2e^{w_2 T}K)^2\sum_{k\in\mathbb{N}}(2\mathbb{N})^{(2p-q)\varepsilon_k}+H^2\sum_{|\alpha|>1}\Big((4c)^{|\alpha|}(2\mathbb{N})^{(p+2)\alpha}\Big)^2(2\mathbb{N})^{-q\alpha}\\
	&=M_2^2+(m_2e^{w_2 T}K)^2\sum_{k\in\mathbb{N}}(2\mathbb{N})^{(2p-q)\varepsilon_k}+H^2\sum_{|\alpha|>1}(16c^2)^{|\alpha|}(2\mathbb{N})^{(2p+4-q)\alpha}.
\end{align*}
Let $s>0$ be such that $2^s\geq 16c^2.$ According to Lemma \ref{lema ocena}, we obtain
\begin{align*}
	\sum_{\alpha\in\mathcal{I}}\sup_{t\in[0,T]}\|u_\alpha(t)\|^2_X(2\mathbb{N})^{-q\alpha}&\leq M_2^2+(m_2e^{w_2 T}K)^2\sum_{k\in\mathbb{N}}(2\mathbb{N})^{(2p-q)\varepsilon_k}\\
	&+H^2\sum_{|\alpha|>1}(2\mathbb{N})^{(2p+4+s-q)\alpha}<\infty
\end{align*}
for $q>2p+s+5.$
This means that the solution is indeed in $C([0,T],\Hzzeta)\otimes (S)_{-1,-q}$ for all $q>2p+s+5.$
\end{proof}

\appendix

%
%
%
\section{White noise analysis}\label{sec:prel}

The materials in this section mostly come, in a somehow shortened form, from \cite{CPS1}.

%
\subsection{Chaos expansions and the Wick product}\label{subs:wpdsoce}

Denote by $(\Omega, \mathcal{F}, P) $ the Gaussian white noise
probability space $(S'(\mathbb{R}), \mathcal{B}, \mu), $ where
$S'(\mathbb{R})$ denotes the space of tempered distributions,
$\mathcal{B}$ the Borel sigma-algebra generated by the weak topology
on $S'(\mathbb{R})$ and $\mu$ the Gaussian white noise measure corresponding to  the
characteristic function
\begin{equation*}\label{BM theorem}
\int_{S'(\mathbb{R})} \,  e^{{i\langle\omega, \phi\rangle}}
d\mu(\omega) = \exp \left [-\frac{1}{2} \|\phi\|^2_{L^2(\mathbb{R})}\right], \quad \quad\phi\in  S(\mathbb{R}),
\end{equation*}
given by the Bochner-Minlos theorem.

We recall the notions related to $L^2(\Omega,\mu)$ (see \cite{HOUZ}),
where $\Omega=S'(\R)$ and $\mu$ is Gaussian white noise measure. We adopt the notation
$\mathbb N_0=\{0,1,2,\dots\}$, $\mathbb N=\mathbb N_0\setminus\{0\}=\{1,2,\dots\}$.
Define the set of multi-indices $\mathcal I$ to be $(\mathbb N_0^\mathbb N)_c$,
that is, the set of sequences of non-negative integers which have only
finitely many nonzero components. Especially, we denote by $\mathbf 0=(0,0,0,\ldots)$ the multi-index with all entries equal to zero. The length of a multi-index is $|\alpha|=\sum_{i=1}^\infty\alpha_i$ for $\alpha=(\alpha_1,\alpha_2,\ldots)\in\mathcal I$, and it is always finite. Similarly, $\alpha!=\prod_{i=1}^\infty\alpha_i!$, and all other operations are also carried out componentwise. We will use the convention that $\alpha-\beta$ is defined if $\alpha_n-\beta_n\geq0$ for all $n\in\mathbb N$, that is, if $\alpha-\beta\geq\mathbf 0$, and leave $\alpha-\beta$ undefined if $\alpha_n<\beta_n$ for some $n\in\mathbb N$. We here denote by $h_n$, $n\in\mathbb N_0$, the Hermite orthogonal polynomials
\[
h_n(x)=(-1)^n \, e^\frac{x^2}{2}\,\frac{d^n}{dx^n}\left(e^{-\frac{x^2}{2}}\right),
\]
and by $\xi_n$, $n\in\mathbb N$, the Hermite functions
\[
	\xi_n(x)=((n-1)! \sqrt{\pi})^{-\frac{1}{2}}e^{-\frac{x^2}{2}}h_{n-1}(x\sqrt 2).
\]

The
Wiener-It\^o theorem  states that one can define an orthogonal
basis $\{H_\alpha\}_{\alpha\in\mathcal I}$ of $L^2(\Omega,\mu)$,
where $H_\alpha$ are constructed by  means of Hermite orthogonal
polynomials $h_n$ and Hermite functions $\xi_n$,
\begin{equation}\label{Hermitbase}H_\alpha(\omega)=\prod_{n=1} ^\infty h_{\alpha_n}(\langle\omega,\xi_n\rangle),\quad \alpha=(\alpha_1,\alpha_2,\ldots, \alpha_n\ldots)\in\mathcal I,\quad \omega\in\Omega=S'(\mathbb{R}).\end{equation}
Then, every $F\in L^2(\Omega,\mu)$ can be represented via the so
called \emph{chaos expansion}
$$F(\omega)=\sum_{\alpha\in\mathcal I} f_\alpha H_\alpha(\omega), \quad \omega\in S'(\mathbb{R}),\quad\sum_{\alpha\in\mathcal I} |f_\alpha|^2\alpha!<\infty,\quad f_\alpha\in\R,\quad\alpha\in\mathcal I.$$

Denote by $\varepsilon_k=(0,0,\ldots, 1, 0,0,\ldots),\;k\in \N$ the
multi-index with the entry 1 at the $k$th place. Denote by $\mathcal
H_1$ the subspace of $L^2(\Omega,\mu)$, spanned by the polynomials
$H_{\varepsilon_k}(\cdot)$, $k\in\mathbb N$. All elements of  $\mathcal
H_1$ are Gaussian stochastic processes, e.g. the most prominent one is Brownian motion 
given by the chaos expansion $B(t,\omega) = \sum_{k=1}^\infty
\int_0^t \xi_k(s)ds\;H_{\varepsilon_k}(\omega).$

Denote by $\mathcal H_m$ the $m$th order chaos space, that is, the
closure of the linear subspace spanned by the orthogonal polynomials
$H_\alpha(\cdot)$ with $|\alpha|=m$, $m\in\mathbb N_0$. Then the
Wiener-It\^o chaos expansion states that
$L^2(\Omega,\mu)=\bigoplus_{m=0}^\infty \mathcal H_m$, where
$\mathcal H_0$ is the set of constants in $L^2(\Omega,\mu)$. The expectation of a random variable is its orthogonal projection onto $\mathcal H_0$, hence it is given by $E(F(\omega))=f_{\minull}$.


It is well-known that the time-derivative of Brownian motion (white
noise process) does not exist in the classical sense. However,
changing the topology on $L^2(\Omega,\mu)$ to a weaker one, T. Hida
\cite{Hida} defined spaces of generalized random variables
containing the white noise as a weak derivative of the Brownian
motion. We refer to \cite{Hida,HOUZ,HKuo} for white noise
analysis (as an infinite dimensional analogue of the Schwartz theory
of deterministic generalized functions).

Let $(2\mathbb N)^{\alpha}=\prod_{n=1}^\infty (2n)^{\alpha_n},\quad
\alpha=(\alpha_1,\alpha_2,\ldots, \alpha_n,\ldots)\in\mathcal I.$ We
often use the fact that the series $\sum_{\alpha\in\mathcal
	I}(2\mathbb N)^{-p\alpha}$ converges  for $p>1$ \cite[Proposition 2.3.3]{HOUZ}. Define the Banach
spaces
$$(S)_{1,p} =\{F=\sum_{\alpha\in\mathcal I}f_\alpha {H_\alpha}\in L^2(\Omega,\mu):\;  \|F\|^2_{(S)_{1,p}}= \sum_{\alpha\in\mathcal I}(\alpha!)^2 |f_\alpha|^2(2\mathbb N)^{p\alpha}<\infty\},\quad p\in\mathbb N_0.$$
Their topological dual spaces are given by
$$(S)_{-1,-p} =\{F=\sum_{\alpha\in\mathcal I}f_\alpha {H_\alpha}:\;  \|F\|^2_{(S)_{-1,-p}}= \sum_{\alpha\in\mathcal I}|f_\alpha|^2(2\mathbb N)^{-p\alpha}<\infty\},\quad p\in\mathbb N_0.$$
The Kondratiev space of generalized random variables is $(S)_{-1}
=\bigcup_{p\in\mathbb N_0}(S)_{-1,-p}$ endowed with the inductive
topology. It is the strong dual of $(S)_{1} =\bigcap_{p\in\mathbb
	N_0}(S)_{1,p}$, called the Kondratiev space of test random variables
which is endowed with the projective topology.
Thus,
$$(S)_{1} \subseteq L^2(\Omega,\mu) \subseteq
(S)_{-1}$$ forms a Gelfand triplet.

The time-derivative of the Brownian motion exists in the generalized
sense and belongs to the Kondratiev space $(S)_{-1,-p}$ for
$p>\frac5{12}$ \cite[page 21]{HKuo}. We refer to it as to \emph{white noise}
and its formal expansion is given by
$W(t,\omega) =
\sum_{k=1}^\infty \xi_k(t)H_{\varepsilon_k}(\omega).$

In \cite{GRPW}, the definition of stochastic processes is extended
also to processes of the chaos expansion form
$U(t,\omega)=\sum_{\alpha\in\mathcal I}u_\alpha(t)
{H_\alpha}(\omega)$, where the coefficients $u_\alpha$ are elements
of some Banach space $X$. We say that $U$ is an \emph{$X$-valued generalized stochastic process}, that is, $U(t,\omega)\in X\otimes (S)_{-1}$ if there exists $p>0$ such that $\|U\|_{X\otimes(S)_{-1,-p}}^2=\sum_{\alpha\in\mathcal I}\|u_\alpha\|_X^2(2\mathbb N)^{-p\alpha}<\infty$.

The notation $\otimes$ is used for the completion of a tensor product with respect to the $\pi-$topology (see \cite{treves}). We note that  if one of the spaces involved in the tensor product is nuclear, then the completions with respect to the $\pi-$ and the $\varepsilon-$topology coincide. It is known that $(S)_1$ and $(S)_{-1}$ are nuclear spaces \cite[Lemma 2.8.2]{HOUZ}, thus in all forthcoming identities $\otimes$ can be equivalently interpreted 
as the $\widehat{\otimes}_\pi$- or $\widehat{\otimes}_\varepsilon$-completed tensor product. Thus, when dealing 
with the tensor products with $(S)_{1,p}$ and $(S)_{-1,-p}$, we work with the $\pi$-topology.

The \emph{Wick product} of two stochastic
processes $F=\sum_{\alpha\in\mathcal I}f_\alpha H_\alpha$ and
$G=\sum_{\beta\in\mathcal I}g_\beta H_\beta\in X\otimes(S)_{-1}$
is given by $$F\lozenge G = \sum_{\gamma\in\mathcal
	I}\sum_{\alpha+\beta=\gamma}f_\alpha g_\beta H_\gamma =
\sum_{\alpha\in\mathcal I}\sum_{\beta\leq \alpha} f_\beta
g_{\alpha-\beta} H_\alpha,$$ and the $n$th Wick power is defined by
$F^{\lozenge n}=F^{\lozenge (n-1)}\lozenge F$, $F^{\lozenge 0}=1$.
Note that
$H_{n\varepsilon_k}=H_{\varepsilon_k}^{\lozenge n}$ for $n\in\mathbb
N_0$, $k\in\mathbb N$. The Wick product always exists and results in a new element of $X\otimes(S)_{-1}$, moreover it exhibits the property of $E(F\lozenge G)=E(F)E(G)$ holding true. The ordinary product of two generalized stochastic processes does not always exist and $E(F\cdot G)=E(F)E(G)$ would hold only if $F$ and $G$ were uncorrelated.

One particularly important choice for the Banach space $X$ is $X=C^k[0,T]$, $k\in\mathbb N$.
In \cite{ps} it is proved that differentiation of a stochastic process can be carried
out componentwise in the chaos expansion, that is, due to the fact that
$(S)_{-1}$ is a nuclear space it holds that
$C^k([0,T],(S)_{-1})=C^k[0,T]\otimes(S)_{-1}$. This means that a
stochastic process $U(t,\omega)$ is $k$ times continuously
differentiable if and only if all of its coefficients $u_\alpha(t)$,
$\alpha\in\mathcal I$ are in $C^k[0,T]$.

The same holds for Banach space valued stochastic processes that is,
elements of  $C^k([0,T],X)\otimes(S)_{-1}$, where $X$ is an
arbitrary Banach space. By the nuclearity of $(S)_{-1}$, these
processes can be regarded as elements of the tensor product spaces
$$C^k([0,T],X\otimes
(S)_{-1})=C^k([0,T],X)\otimes(S)_{-1}=\bigcup_{p=0}^{\infty}C^k([0,T],X)\otimes
(S)_{-1,-p}.$$

In order to solve \eqref{eq:SPDE} we choose some specific Banach spaces, suggested by the associated deterministic theory.
In general, the function spaces that we will adopt as those where to
look for the solutions to \eqref{eq:SPDE} will be of the form
\begin{equation}\label{eq:solspa}
	L^2(I,G_k)\otimes (S)_{-1}, \quad k\in\mathbb Z,
\end{equation}
or
\begin{equation}\label{eq:solspb}
	\bigcap_{l\ge k\ge0} C^k(I,G_k)\otimes (S)_{-1},\quad 1\le l\le \infty,
\end{equation}
where $I\subset\R$ is an interval of the form $[0,T]$ or $[0,\infty)$,
and $G_k$, $k=0,1,2,\cdots,l$,
or $k\in\Z_+$, are suitable Hilbert spaces (or Banach spaces) such that
\[
\cdots\hookrightarrow G_{k+1}\hookrightarrow G_{k}\cdots \hookrightarrow G_1 \hookrightarrow G_{0},
\]
where $\hookrightarrow$ denotes dense continuous embeddings. We can also consider the topological duals of $G_j$, $j\in\mathbb Z_+$, denoted by $G_{-j}$, respectively, and write
\[
G_{0}\hookrightarrow G_{-1} \hookrightarrow G_{-2}\hookrightarrow\cdots\hookrightarrow G_{-k}\hookrightarrow G_{-(k+1)}\hookrightarrow\cdots.
\]
%

In particular, for the spaces in \eqref{eq:solspa} and
in \eqref{eq:solspb} 
we have, respectively,
\[
L^2(I,G_k)\otimes (S)_{-1} \simeq L^2(I,G_k\otimes (S)_{-1}) \simeq \bigcup_{r=0}^\infty L^2(I,G_k)\otimes(S)_{-1,-r},
\]
\[
C^j(I,G_k)\otimes (S)_{-1} \simeq C^j(I,G_k\otimes (S)_{-1}) \simeq \bigcup_{r=0}^\infty C^j(I,G_k)\otimes(S)_{-1,-r}.
\]

\subsection{Estimates on functions of multiindeces} \label{subs:catalan}
We also recall some useful estimates that we intensely utilize in Section \ref{sec:wickp}. The proofs of these estimates can be found in \cite{KL} and \cite{Milica2}.
\begin{lemma} \label{ocena faktorijela} Let $\alpha\in \mathcal{I}.$ Then,
	\begin{align*}
		\frac{|\alpha|!}{\alpha!}&\leq (2\mathbb{N})^{2\alpha}.
	\end{align*}
\end{lemma}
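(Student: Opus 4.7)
The plan is to argue by strong induction on the length $n=|\alpha|$. The base case $\alpha=\mathbf{0}$ is immediate, since both sides of the asserted inequality equal $1$. For the inductive step, I fix $\alpha$ with $|\alpha|=n\geq 1$, assume the bound holds for every multi-index of length strictly less than $n$, and pick an index $k\in\supp\alpha$ (to be specified shortly). Setting $\beta=\alpha-\varepsilon_k$, which has length $|\beta|=n-1$, the factorial identity $\alpha!=\alpha_k\cdot\beta!$ yields
$$
\frac{|\alpha|!}{\alpha!}\;=\;\frac{n}{\alpha_k}\cdot\frac{|\beta|!}{\beta!},
$$
while directly from the definition $(2\mathbb{N})^{2\beta}=(2\mathbb{N})^{2\alpha}/(2k)^{2}$. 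Combining these two identities with the induction hypothesis $|\beta|!/\beta!\leq(2\mathbb{N})^{2\beta}$ gives
$$
\frac{|\alpha|!}{\alpha!}\;\leq\;\frac{n}{4\,k^{2}\,\alpha_k}\,(2\mathbb{N})^{2\alpha}.
$$

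Hence the inductive step closes as soon as I can exhibit some $k\in\supp\alpha$ satisfying $4\,k^{2}\,\alpha_k\geq n$. This combinatorial subclaim is the only nontrivial point, and I would establish it by contradiction: if on the contrary $\alpha_k<n/(4k^{2})$ for every $k\in\supp\alpha$, then summing over the support and invoking $\sum_{k=1}^{\infty}k^{-2}=\pi^{2}/6$ gives
$$
n\;=\;\sum_{k\in\supp\alpha}\alpha_k\;<\;\frac{n}{4}\sum_{k\in\supp\alpha}\frac{1}{k^{2}}\;\leq\;\frac{n\pi^{2}}{24}\;<\;n,
$$
which is absurd since $\pi^{2}/24<1$. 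With $k$ chosen as in the subclaim one has $n/(4k^{2}\alpha_k)\leq 1$, and the induction is complete.

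The main obstacle is precisely the subclaim: one cannot simply pick any $k\in\supp\alpha$ (for instance, picking $k$ to be the largest index, or the index realizing $\max\alpha_k$, does not suffice in general), and the naïve multinomial bounds such as $|\alpha|!/\alpha!\leq N^{|\alpha|}$ with $N=\max\supp\alpha$ are too crude. What makes the argument go through is that the constant $4$ built into $(2k)^{2}$ is large enough for the $\ell^{2}$-type bound $\sum 1/k^{2}<4$ to force the existence of a suitable index via pigeonholing on the support.
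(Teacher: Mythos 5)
Your proof is correct. Note that the paper itself does not prove this lemma: it is stated in Appendix \ref{subs:catalan} with the proof deferred to the references \cite{KL} and \cite{Milica2}, so there is no in-paper argument to compare against line by line. Your induction is sound: the identities $\frac{|\alpha|!}{\alpha!}=\frac{n}{\alpha_k}\cdot\frac{|\beta|!}{\beta!}$ and $(2\mathbb{N})^{2\beta}=(2\mathbb{N})^{2\alpha}/(2k)^2$ for $\beta=\alpha-\varepsilon_k$ are right, and the pigeonhole subclaim is the genuine content — if $\alpha_k<n/(4k^2)$ for all $k\in\supp\alpha$, then summing the finitely many strict inequalities over the (nonempty, since $n\geq 1$) support and using $\sum_{k\geq 1}k^{-2}=\pi^2/6$ gives $n<n\pi^2/24<n$, a contradiction. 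You are also right that the crude bound $|\alpha|!/\alpha!\leq N^{|\alpha|}$ with $N=\max\supp\alpha$ cannot work here. For comparison, the argument in the cited literature can be run through the entropy bound $\frac{n!}{\alpha!}\leq \frac{n^n}{\prod_k\alpha_k^{\alpha_k}}$ followed by a weighted AM--GM (Gibbs) inequality against the reference weights $q_k\propto (4k^2)^{-1}$; that route and yours both hinge on exactly the same numerical fact, namely $\sum_{k\geq 1}\frac{1}{4k^2}=\frac{\pi^2}{24}<1$, so your pigeonhole step is essentially a discrete form of the standard argument, obtained more elementarily.
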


\begin{lemma} \label{lema ocena} For every $c>0$ there exists $q>1$ such that
	\begin{align*}
		\sum_{\alpha\in\mathcal{I}}c^{|\alpha|}(2\mathbb{N})^{-q\alpha}<\infty.
	\end{align*}
\end{lemma}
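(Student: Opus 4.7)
The plan is to bound the factor $c^{|\alpha|}$ by a geometric multi-index weight of the form $(2\mathbb{N})^{s\alpha}$ for a suitable $s \geq 0$, so that the sum reduces to the standard convergence criterion $\sum_{\alpha\in\mathcal I}(2\mathbb N)^{-p\alpha}<\infty$ for $p>1$ cited from \cite[Proposition 2.3.3]{HOUZ}.

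First, I would distinguish two cases. If $0<c\le 1$, then trivially $c^{|\alpha|}\le 1$ for every $\alpha\in\mathcal I$, and the claim follows immediately by choosing any $q>1$. Hence the interesting case is $c>1$, where I would pick $s>0$ large enough that $2^s\ge c$, concretely $s:=\lceil \log_2 c\rceil$ (or any real number $s\ge \log_2 c$).

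Next, with such $s$, I would exploit the product structure of $(2\mathbb N)^{\alpha}=\prod_{n=1}^\infty (2n)^{\alpha_n}$. Since $2n\ge 2$ for every $n\in\mathbb N$, we have $(2n)^s\ge 2^s\ge c$, so $c\le (2n)^s$ for all $n\in\mathbb N$. Raising to the $\alpha_n$ power and taking the product over $n$ (which is finite since $\alpha\in\mathcal I$ has finitely many nonzero entries), I would obtain
\[
c^{|\alpha|}=\prod_{n=1}^\infty c^{\alpha_n}\le \prod_{n=1}^\infty (2n)^{s\alpha_n}=(2\mathbb N)^{s\alpha},\qquad \alpha\in\mathcal I.
\]
Substituting this into the sum gives
\[
\sum_{\alpha\in\mathcal I} c^{|\alpha|}(2\mathbb N)^{-q\alpha}\le \sum_{\alpha\in\mathcal I}(2\mathbb N)^{-(q-s)\alpha}.
\]

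Finally, by \cite[Proposition 2.3.3]{HOUZ}, the right-hand side is finite whenever $q-s>1$. Thus it suffices to choose any $q>\max\{1,s+1\}=s+1$ (with $s=\lceil\log_2 c\rceil$ in the case $c>1$, and $s=0$ in the case $c\le 1$), and the lemma follows. I do not foresee a real obstacle here; the only subtle point is the reduction from a sum twisted by $c^{|\alpha|}$ to the standard weighted sum, which is handled by the uniform bound $c\le(2n)^s$ for all $n\ge 1$.
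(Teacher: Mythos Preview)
Your proof is correct. The paper does not include its own proof of this lemma, citing \cite{KL} and \cite{Milica2} instead; however, the argument you give---choosing $s\ge0$ with $2^s\ge c$ so that $c^{|\alpha|}\le(2\mathbb N)^{s\alpha}$ and then invoking \cite[Proposition~2.3.3]{HOUZ}---is exactly the standard one, and indeed the application of this lemma in the proof of Theorem~\ref{thm:main3} makes the same choice (``Let $s>0$ be such that $2^s\ge 16c^2$'').
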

\begin{lemma}\label{katalan}
	A sequence $\{\mathbf{c}_n\}_{n\in \mathbb N}$  defined by the recurrence relation
	\begin{equation}
		\label{Catalan}
		\mathbf{c}_0=1, \quad 
		\mathbf{c}_n=  \sum_{k=0}^{n-1} \, \mathbf{c}_k  \, \mathbf{c}_{n-1-k},  
		\quad n\geq 1 ,
	\end{equation}
	is called  the sequence of Catalan numbers.  The closed formula for $\mathbf{c}_n$ is a multiple of the binomial coefficient, that is,  the solution of the Catalan recurrence \eqref{Catalan} is 
	\begin{equation*}\label{Catalan1}
		\mathbf{c}_n= \frac1{n+1} \, \binom{2n}{n}\quad\mbox{or}\quad \mathbf{c}_n = \binom{2n}{n} - \binom{2n}{n+1}.
	\end{equation*}
	The Catalan numbers satisfy the growth estimate
	\begin{equation}\label{Cat ocena}
		\mathbf{c}_n\leq 4^n,\;n\geq 0.
	\end{equation}
\end{lemma}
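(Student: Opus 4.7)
The plan is to use the standard generating-function argument, which handles both the closed formula and the growth bound in one sweep. Define the formal power series $C(x)=\sum_{n\geq 0}\mathbf{c}_n x^n$. Multiplying the recurrence \eqref{Catalan} by $x^n$ and summing over $n\geq 1$, the right-hand side becomes $x\,C(x)^2$ by the Cauchy product rule, while the left-hand side is $C(x)-1$. This yields the functional equation $xC(x)^2-C(x)+1=0$.

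Solving this quadratic in $C(x)$ and selecting the branch that is analytic at the origin (so that $C(0)=\mathbf{c}_0=1$) gives $C(x)=\dfrac{1-\sqrt{1-4x}}{2x}$. The generalized binomial theorem applied to $(1-4x)^{1/2}$ produces a series whose $n$-th coefficient, after dividing by $2x$ and a bit of manipulation of the Pochhammer symbol $\binom{1/2}{n+1}$, equals $\frac{1}{n+1}\binom{2n}{n}$. This establishes the first closed form. The second form, $\binom{2n}{n}-\binom{2n}{n+1}$, then follows by an elementary algebraic identity: since $\binom{2n}{n+1}=\frac{n}{n+1}\binom{2n}{n}$, one has $\binom{2n}{n}-\binom{2n}{n+1}=\frac{1}{n+1}\binom{2n}{n}$.

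For the growth estimate $\mathbf{c}_n\leq 4^n$, I would simply note that $\mathbf{c}_n=\frac{1}{n+1}\binom{2n}{n}\leq\binom{2n}{n}$, and that $\binom{2n}{n}$ is one term in the binomial expansion of $(1+1)^{2n}=4^n$, hence bounded by $4^n$. Consequently \eqref{Cat ocena} follows immediately.

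The only mildly delicate step is the branch selection in the quadratic solution and the careful expansion of $\sqrt{1-4x}$; choosing the $+$ sign would yield a singularity $1/x$ incompatible with $C(0)=1$, so the $-$ sign is forced. If one prefers to avoid generating functions entirely, an alternative route is to verify the closed formula by strong induction on $n$, using the Vandermonde-type identity $\sum_{k=0}^{n-1}\frac{1}{k+1}\binom{2k}{k}\cdot\frac{1}{n-k}\binom{2(n-k-1)}{n-k-1}=\frac{1}{n+1}\binom{2n}{n}$; this is slightly less transparent but self-contained. Either path reaches the bound $\mathbf{c}_n\leq 4^n$ in essentially one line, so the main expository obstacle is just presenting the coefficient extraction cleanly rather than overcoming any real mathematical difficulty.
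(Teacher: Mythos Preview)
Your argument is correct and complete. The paper does not actually supply a proof of this lemma; it simply states the result and refers the reader to \cite{KL} and \cite{Milica2} for proofs. Your generating-function derivation of the closed form and the one-line bound $\mathbf{c}_n\le\binom{2n}{n}\le(1+1)^{2n}=4^n$ are the standard textbook arguments and are entirely adequate here.
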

\begin{lemma} \label{multi katalan}(\cite[p.21]{KL})
	Let $\{R_\alpha:\;\alpha\in \mathcal{I}\}$ be a set of real numbers such that $R_\mathbf{0}=0,\;R_{\varepsilon_k},\; k\in\mathbb{N}$, are given, and 
	$$R_\alpha=\sum_{\mathbf{0}<\gamma<\alpha}R_\gamma R_{\alpha-\gamma},\quad |\alpha|>1.$$
	Then,
	$$R_\alpha=\frac{1}{|\alpha|}\binom{2|\alpha|-2}{|\alpha|-1} \frac{|\alpha|!}{\alpha!}\prod_{k=1}^\infty R_{\varepsilon_k}^{\alpha_k},\quad |\alpha|>1.$$
\end{lemma}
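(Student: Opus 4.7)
The plan is to proceed by strong induction on $n = |\alpha|$. The base case $|\alpha|=2$ splits into $\alpha = 2\varepsilon_k$ (where $\gamma = \varepsilon_k$ is the only admissible index in the sum, giving $R_{2\varepsilon_k} = R_{\varepsilon_k}^2$) and $\alpha = \varepsilon_k+\varepsilon_\ell$ with $k\neq \ell$ (both $\gamma=\varepsilon_k$ and $\gamma=\varepsilon_\ell$ contribute, giving $R_\alpha = 2 R_{\varepsilon_k}R_{\varepsilon_\ell}$); a direct check confirms that both match the claimed formula.

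For the inductive step, I first observe that the formula is trivially consistent when $|\beta|=1$ (the right-hand side collapses to $R_{\varepsilon_k}$), so the inductive hypothesis can be applied to every $\gamma$ appearing in the recursion $R_\alpha = \sum_{\mathbf{0}<\gamma<\alpha} R_\gamma R_{\alpha-\gamma}$. Substituting into both factors lets me pull out the common product $\prod_k R_{\varepsilon_k}^{\alpha_k}$ and group the surviving terms by $j := |\gamma|$. The inner sum over multi-indices $\gamma$ with $|\gamma|=j$ and $\gamma \leq \alpha$ is handled via the Vandermonde-type identity
\[
\sum_{\substack{|\gamma|=j\\ \gamma \leq \alpha}}\binom{\alpha}{\gamma} = \binom{n}{j},
\]
obtained by comparing coefficients of $x^j$ in $\prod_i (1+x)^{\alpha_i} = (1+x)^n$. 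After this simplification, the whole expression collapses to
\[
R_\alpha = \frac{n!}{\alpha!}\prod_k R_{\varepsilon_k}^{\alpha_k} \cdot \sum_{j=1}^{n-1} \mathbf{c}_{j-1}\mathbf{c}_{n-j-1},
\]
with $\mathbf{c}_m$ the $m$-th Catalan number from Lemma \ref{katalan}. Reindexing $i = j-1$ and invoking the classical Catalan convolution $\sum_{i=0}^{n-2}\mathbf{c}_i \mathbf{c}_{n-2-i} = \mathbf{c}_{n-1} = \frac{1}{n}\binom{2n-2}{n-1}$ produces exactly the constant that appears in the target formula.

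The main technical point is the multi-index Vandermonde identity used to reduce the sum over $\gamma$ of fixed length to a single binomial; the strict inequalities $\mathbf{0} < \gamma < \alpha$ in the recurrence correspond to $1 \leq j \leq n-1$, so the excluded endpoints $\gamma = \mathbf{0}$ and $\gamma = \alpha$ do not enter the computation (they would be harmless anyway, since $R_{\mathbf{0}} = 0$). A cleaner but less elementary alternative would be to encode the recursion as the functional equation $F^2 = F - G$ in the ring of formal power series in infinitely many variables $(z_k)_{k\in\mathbb{N}}$, with $F(z) = \sum_{|\alpha|\geq 1} R_\alpha z^\alpha$ and $G(z) = \sum_k R_{\varepsilon_k} z_k$, solve the quadratic as $F = (1-\sqrt{1-4G})/2$, and then extract the $z^\alpha$-coefficient using the Catalan generating function together with the multinomial expansion of $G^n$; this bypasses the induction entirely at the cost of justifying formal manipulations with infinitely many formal variables.
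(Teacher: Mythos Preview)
Your argument is correct. The base cases check out, and the inductive step is sound: grouping the sum over $\gamma$ by $j=|\gamma|$, applying the multi-index Vandermonde identity $\sum_{|\gamma|=j,\;\gamma\le\alpha}\binom{\alpha}{\gamma}=\binom{n}{j}$ to collapse the inner sum to $\frac{n!}{\alpha!}$, and then using the Catalan convolution $\sum_{i=0}^{n-2}\mathbf{c}_i\mathbf{c}_{n-2-i}=\mathbf{c}_{n-1}$ yields exactly the claimed formula. The remark that the endpoints $\gamma=\mathbf{0}$ and $\gamma=\alpha$ are automatically excluded once $1\le j\le n-1$ (and would be harmless anyway since $R_{\mathbf{0}}=0$) is the right justification for passing from the strict inequalities in the recursion to the componentwise constraint $\gamma\le\alpha$ needed for Vandermonde.

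Note that the paper does not actually supply a proof of this lemma; it is quoted from \cite[p.~21]{KL}, with the surrounding text pointing to \cite{KL} and \cite{Milica2} for proofs of all estimates in that subsection. Your inductive argument is the natural one and is essentially what the cited references carry out. The generating-function alternative you sketch (encoding the recursion as $F^2=F-G$ and reading off coefficients via the Catalan generating series and the multinomial expansion of $G^n$) is also valid and is in fact closer in spirit to how such identities are often first discovered; the formal manipulations with infinitely many indeterminates are unproblematic here because every coefficient of $F$ depends on only finitely many of the $z_k$.
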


%
\subsection{Stochastic operators and differential operators with stochastic coefficients}\label{subs:dwops}
Let $X$ be a Banach space endowed with the norm $\|\cdot\|_X$. Consider $X\otimes (S)_{-1}$ with elements $u=\sum_{\alpha\in\mathcal I}u_\alpha H_\alpha$ so that  $\sum_{\alpha\in\mathcal I}\|u_\alpha\|^2_X(2\mathbb N)^{-p\alpha}<\infty$ for some $p\geq 0$. Let $D\subset X$ be a dense subset of $X$ endowed with the norm $\|\cdot\|_D$ and  $A_\alpha:D\rightarrow X$, $\alpha\in\mathcal I$, be a family of linear operators on this common domain $D$. Assume that each $A_\alpha$ is bounded that is, $$\|A_\alpha\|_{\caL(D,X)}=\mathrm{sup}\{\|A_\alpha(x)\|_X:\; \|x\|_D\leq 1\}<\infty.$$ In case when $D=X$, we will write $\caL(X)$ instead of $\caL(D,X)$.

The family of operators $A_\alpha$, $\alpha\in\mathcal I$, gives rise to a stochastic operator $\mathbf A\lozenge:D\otimes(S)_{-1}\rightarrow X\otimes (S)_{-1}$, that acts in the following manner
\[
\mathbf A\lozenge u = \sum_{\gamma\in\mathcal I}\left(\sum_{\beta+\lambda=\gamma} A_\beta(u_\lambda)\right)H_\gamma.
\]
In the next two lemmas we provide two sufficient conditions that ensure the stochastic operator $\mathbf A\lozenge$ to be well-defined. Both conditions rely on the $l^2$ or $l^1$ bounds with suitable weights. They are actually equivalent to the fact that $A_\alpha$, $\alpha\in\mathcal I$, are polynomially bounded, but they provide finer estimates on the stochastic order (Kondratiev weight) of the domain and codomain of $\mathbf A\lozenge$. Their proofs can be found in \cite{CPS1}.
\begin{lemma}\label{doralemma1}
	If the operators $A_\alpha$, $\alpha\in\mathcal I$,  satisfy $\sum_{\alpha\in\mathcal
		I}\|A_\alpha\|^2_{\caL(D,X)}(2\mathbb N)^{-r\alpha}<\infty$, for some $r\geq0$, then
	$\mathbf A\lozenge$ is well-defined as a mapping $\mathbf A\lozenge: D\otimes(S)_{-1,-p}\rightarrow X\otimes(S)_{-1,-(p+r+m)}$, $m>1$.
\end{lemma}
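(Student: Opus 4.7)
The goal is to show that for $u=\sum_{\lambda\in\mathcal I}u_\lambda H_\lambda\in D\otimes(S)_{-1,-p}$, the series $\mathbf A\lozenge u=\sum_{\gamma\in\mathcal I}c_\gamma H_\gamma$ with $c_\gamma=\sum_{\beta+\lambda=\gamma}A_\beta(u_\lambda)$ has coefficients $c_\gamma\in X$, and that the weighted $\ell^2$-sum controlling the $X\otimes(S)_{-1,-(p+r+m)}$-norm is finite. My plan is thus to estimate $\sum_{\gamma}\|c_\gamma\|_X^2(2\mathbb N)^{-(p+r+m)\gamma}$ from above by the product of three finite quantities: $M_A:=\sum_{\beta}\|A_\beta\|_{\caL(D,X)}^2(2\mathbb N)^{-r\beta}$ (finite by hypothesis), $M_u:=\sum_\lambda\|u_\lambda\|_D^2(2\mathbb N)^{-p\lambda}$ (finite since $u\in D\otimes(S)_{-1,-p}$), and the residual series $\sum_\gamma(2\mathbb N)^{-m\gamma}$, which converges thanks to $m>1$ by the standard estimate in \cite[Proposition 2.3.3]{HOUZ}.

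First I would invoke the operator bound $\|A_\beta(u_\lambda)\|_X\le \|A_\beta\|_{\caL(D,X)}\|u_\lambda\|_D$ together with the triangle inequality, to obtain the pointwise estimate
\[
\|c_\gamma\|_X\le\sum_{\beta+\lambda=\gamma}\|A_\beta\|_{\caL(D,X)}\|u_\lambda\|_D.
\]
Then I would apply a Cauchy--Schwarz inequality, inserting the weight $(2\mathbb N)^{-r\beta/2}\cdot(2\mathbb N)^{r\beta/2}=1$ inside each summand so as to split a square of a convolution-type sum into the product of a ``symbol side" with the weight $(2\mathbb N)^{-r\beta}$ and a ``data side" with the weight $(2\mathbb N)^{r\beta}$:
\[
\Bigl(\sum_{\beta+\lambda=\gamma}\|A_\beta\|_{\caL(D,X)}\|u_\lambda\|_D\Bigr)^{2}\le\Bigl(\sum_{\beta\le\gamma}\|A_\beta\|_{\caL(D,X)}^2(2\mathbb N)^{-r\beta}\Bigr)\Bigl(\sum_{\lambda\le\gamma}\|u_\lambda\|_D^2(2\mathbb N)^{r(\gamma-\lambda)}\Bigr).
\]
The first factor is bounded by $M_A$ independently of $\gamma$.

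Next I would substitute this bound back, multiply by $(2\mathbb N)^{-(p+r+m)\gamma}$, and use Fubini to interchange the order of summation over $\gamma$ and $\lambda$. Setting $\beta=\gamma-\lambda$, the sum becomes
\[
M_A\sum_{\lambda\in\mathcal I}\|u_\lambda\|_D^2(2\mathbb N)^{-(p+r+m)\lambda}\sum_{\beta\in\mathcal I}(2\mathbb N)^{-(p+m)\beta}.
\]
Since $p\ge 0$ and $m>1$, both remaining series are finite: the one in $\lambda$ is bounded by $M_u$ (as $(2\mathbb N)^{-(r+m)\lambda}\le1$), and the one in $\beta$ converges by $p+m>1$. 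This gives the desired estimate and therefore $\mathbf A\lozenge u\in X\otimes(S)_{-1,-(p+r+m)}$, with linear continuity following by the same chain of inequalities.

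The main technical point, and essentially the only non-routine step, is the choice of the Cauchy--Schwarz splitting in the second step: one must distribute the weight $(2\mathbb N)^{-(p+r+m)\gamma}=(2\mathbb N)^{-(p+r+m)\beta}(2\mathbb N)^{-(p+r+m)\lambda}$ so that $(2\mathbb N)^{-r\beta}$ is absorbed by the operator hypothesis, $(2\mathbb N)^{-p\lambda}$ by the data hypothesis, and a summable residual $(2\mathbb N)^{-m\gamma}$ remains; the extra loss $m>1$ is precisely what is unavoidable when converting a formal Wick-type convolution into a genuinely convergent series in the Kondratiev scale, as is the case in the classical V\aa ge inequality.
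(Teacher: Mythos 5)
Your argument is correct: the Cauchy--Schwarz splitting with the weight $(2\mathbb N)^{-r\beta/2}(2\mathbb N)^{r\beta/2}$, followed by the interchange of summation and the observation that the residual factor $(2\mathbb N)^{-(p+m)\beta}$ is summable for $m>1$, gives exactly the claimed bound $\sum_\gamma\|c_\gamma\|_X^2(2\mathbb N)^{-(p+r+m)\gamma}\le M_A\,M_u\sum_\beta(2\mathbb N)^{-(p+m)\beta}<\infty$. The paper itself defers the proof to \cite{CPS1}, and your weight-distribution argument is the standard (V\aa ge-type) one used there, so this is essentially the same approach.
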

\begin{lemma}\label{doralemma2}
	If the operators $A_\alpha$, $\alpha\in\mathcal I$,  satisfy  $\sum_{\alpha\in\mathcal
		I}\|A_\alpha\|_{\caL(D,X)}(2\mathbb N)^{-\frac{r}{2}\alpha}<\infty$, for some $r\geq0$, then $\mathbf A\lozenge$ is well-defined as a mapping $\mathbf A\lozenge: D\otimes(S)_{-1,-r}\rightarrow X\otimes(S)_{-1,-r}$.
\end{lemma}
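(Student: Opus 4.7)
The plan is to establish a norm estimate of the form
\[
\|\mathbf{A}\lozenge u\|_{X\otimes(S)_{-1,-r}} \le C_A\,\|u\|_{D\otimes(S)_{-1,-r}},
\qquad
C_A := \sum_{\alpha\in\mathcal I}\|A_\alpha\|_{\caL(D,X)}(2\mathbb N)^{-\frac{r}{2}\alpha},
\]
which simultaneously shows that $\mathbf A\lozenge u$ belongs to $X\otimes(S)_{-1,-r}$ whenever $u\in D\otimes(S)_{-1,-r}$, and that the map is continuous. Since the constant $C_A$ is finite by hypothesis, this would finish the proof. Note that, compared with Lemma \ref{doralemma1}, one trades the $\ell^2$ summability of $\|A_\alpha\|_{\caL(D,X)}(2\mathbb N)^{-r\alpha/2}$ for $\ell^1$ summability, which is precisely what allows us to stay on the same Kondratiev level $-r$ rather than descending by an additional $m>1$.

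\textbf{Step 1: coefficient-wise triangle inequality.} Fix $u=\sum_{\lambda\in\mathcal I}u_\lambda H_\lambda\in D\otimes(S)_{-1,-r}$. From the definition of $\mathbf A\lozenge u$ and the triangle inequality in $X$,
\[
\Bigl\|\sum_{\beta+\lambda=\gamma}A_\beta(u_\lambda)\Bigr\|_X \le \sum_{\beta+\lambda=\gamma}\|A_\beta\|_{\caL(D,X)}\,\|u_\lambda\|_D,
\qquad \gamma\in\mathcal I.
\]

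\textbf{Step 2: factoring the Kondratiev weight.} The key arithmetic fact is the multiplicativity $(2\mathbb N)^{r\gamma}=(2\mathbb N)^{r\beta}(2\mathbb N)^{r\lambda}$ whenever $\beta+\lambda=\gamma$, which lets one split the weight symmetrically:
\[
\Bigl\|\sum_{\beta+\lambda=\gamma}A_\beta(u_\lambda)\Bigr\|_X (2\mathbb N)^{-\frac{r}{2}\gamma}
\le \sum_{\beta+\lambda=\gamma}\bigl(\|A_\beta\|_{\caL(D,X)}(2\mathbb N)^{-\frac{r}{2}\beta}\bigr)\bigl(\|u_\lambda\|_D(2\mathbb N)^{-\frac{r}{2}\lambda}\bigr).
\]
Introducing $a_\beta:=\|A_\beta\|_{\caL(D,X)}(2\mathbb N)^{-\frac{r}{2}\beta}$ and $b_\lambda:=\|u_\lambda\|_D(2\mathbb N)^{-\frac{r}{2}\lambda}$, the right-hand side is the discrete convolution $(a*b)_\gamma$ of two sequences indexed by $\mathcal I$.

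\textbf{Step 3: $\ell^1$-$\ell^2$ convolution estimate.} By hypothesis $\sum_\beta a_\beta =C_A<\infty$, while $\sum_\lambda b_\lambda^2=\|u\|_{D\otimes(S)_{-1,-r}}^2<\infty$. Using the Cauchy--Schwarz inequality on each convolution sum,
\[
(a*b)_\gamma^2 \le \Bigl(\sum_{\beta+\lambda=\gamma}a_\beta\Bigr)\Bigl(\sum_{\beta+\lambda=\gamma}a_\beta b_\lambda^2\Bigr)\le C_A\sum_{\beta+\lambda=\gamma}a_\beta b_\lambda^2,
\]
and then summing over $\gamma\in\mathcal I$ and exchanging the order of summation gives
\[
\sum_{\gamma\in\mathcal I}(a*b)_\gamma^2 \le C_A \sum_{\beta\in\mathcal I}a_\beta\sum_{\lambda\in\mathcal I}b_\lambda^2 = C_A^2\,\|u\|_{D\otimes(S)_{-1,-r}}^2.
\]

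\textbf{Step 4: conclusion.} Combining Steps 1--3 yields
\[
\|\mathbf A\lozenge u\|_{X\otimes(S)_{-1,-r}}^2 = \sum_{\gamma\in\mathcal I}\Bigl\|\sum_{\beta+\lambda=\gamma}A_\beta(u_\lambda)\Bigr\|_X^2(2\mathbb N)^{-r\gamma} \le C_A^2\,\|u\|_{D\otimes(S)_{-1,-r}}^2,
\]
showing both that $\mathbf A\lozenge u\in X\otimes(S)_{-1,-r}$ and that $\mathbf A\lozenge$ is bounded $D\otimes(S)_{-1,-r}\to X\otimes(S)_{-1,-r}$ with operator norm $\le C_A$. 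No genuine obstacle is anticipated: the only nontrivial ingredient is the Young/Cauchy--Schwarz-type control of a convolution by an $\ell^1$ kernel on an $\ell^2$ sequence, enabled by the multiplicativity of the weight $(2\mathbb N)^{\,\cdot\,}$ under addition of multi-indices. Compared with Lemma \ref{doralemma1}, no auxiliary summability factor $\sum_\alpha(2\mathbb N)^{-m\alpha}$ (with $m>1$) is needed, because here the $\ell^1$ hypothesis on the operator norms already carries the required absolute convergence.
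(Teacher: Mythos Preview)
Your proof is correct and matches the approach the paper relies on: although the paper defers the proof of this lemma to \cite{CPS1}, the identical weighted convolution estimate $\|a*b\|_{\ell^2(\mathcal I)}\le\|a\|_{\ell^1(\mathcal I)}\|b\|_{\ell^2(\mathcal I)}$ (via the multiplicativity of $(2\mathbb N)^{\,\cdot\,}$ and Cauchy--Schwarz) is precisely what is invoked as ``immediate estimates'' in the proof of Theorem~\ref{thm:mainglob}. Your Steps~2--3 simply make that computation explicit.
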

For example, let $D=H_0^1(\mathbb R)$, $X=L^2(\mathbb R)$ and $A_\alpha = a_\alpha\cdot \partial_x$, $a_\alpha\in\mathbb R$, be scalars such that $\sum_{\alpha\in\mathcal
	I}|a_\alpha|^2(2\mathbb N)^{-r\alpha}<\infty$, for some $r\geq0$. Then $\|A_\alpha\|_{\caL(D,X)}=|a_\alpha|$, hence for $u\in H_0^1(\mathbb R)\otimes (S)_{-1}$ we have 
	\[
		\mathbf A\lozenge u(x,\omega)=\sum_{\gamma\in\mathcal I}\left(\sum_{\alpha+\beta=\gamma}a_\alpha\cdot \partial_x(u_\beta(x))\right)H_\gamma(\omega)
	\]
	is a well-defined element in $L^2(\mathbb R)\otimes(S)_{-1}$. A similar example may be constructed with $D=L^2(\mathbb R)$ and $X=H^{-1}(\mathbb R)$. Note that in these examples, we could have written the operator also in the form $\mathbf A=a(\omega)\partial_x$, where $a(\omega)=\sum_{\alpha\in\mathcal I}a_\alpha H_\alpha(\omega)\in (S)_{-1,-r}$.

Considering the differential operator $\mathbf L$ that governs equation \eqref{eq:SPDE}, we have made special choices for the domain $D$ and range $X$, involving (subspaces of) the (weighted) Sobolev-Kato spaces $H^{z,\zeta}(\mathbb R^d)$ and many other types of spaces that stem from the $SG$ pseudodifferential calculus. 

\section{The calculus of $SG$ pseudodifferential operators}\label{subs:sgcalc}
%
%
%
We here recall some basic definitions and facts about the $SG$-calculus of pseudodifferential 
operators, through
standard material appeared, e.g., in \cite{ACS19b,CPS1} and elsewhere (sometimes with slightly different notational choices).
We often employ the so-called \textit{japanese bracket} of $y\in\R^d$, given by $\langle y \rangle =\sqrt{1+|y|^2}$.

The class $S ^{m,\mu}=S ^{m,\mu}(\R^{d})$ of $SG$ symbols of order $(m,\mu) \in \R^2$ is given by all the functions 
$a(x,\xi) \in C^\infty(\R^d\times\R^d)$
with the property
that, for any multiindices $\alpha,\beta \in \N_0^d$, there exist
constants $C_{\alpha\beta}>0$ such that the conditions 
\begin{equation}
	\label{eq:disSG}
	|D_x^{\alpha} D_\xi^{\beta} a(x, \xi)| \leq C_{\alpha\beta} 
	\x^{m-|\alpha|}\csi^{\mu-|\beta|},
	\qquad (x, \xi) \in \R^d \times \R^d,
\end{equation}
hold (see \cite{cordes,ME,PA72}). We often omit the base spaces $\R^d$, $\R^{2d}$, etc., from the notation.

For $m,\mu\in\R$, $\ell\in\N_0$,
\[
	\vvvert a \vvvert^{m,\mu}_\ell
	= 
	\max_{|\alpha+\beta|\le \ell}\sup_{x,\xi\in\R^d}\x^{-m+|\alpha|} 
	                                                                     \csi^{-\mu+|\beta|}
	                                                                    | \partial^\alpha_x\partial^\beta_\xi a(x,\xi)|, \quad a\in\ S^{m,\mu},
\]
is a family of seminorms, defining  the Fr\'echet topology of $S^{m,\mu}$.

The corresponding
classes of pseudodifferential operators $\Op (S ^{m,\mu})=\Op (S ^{m,\mu}(\R^d))$ are given by
\begin{equation}\label{eq:psidos}
	(\Op(a)u)(x)=(a(.,D)u)(x)=(2\pi)^{-d}\int e^{\ii x\xi}a(x,\xi)\hat{u}(\xi)d\xi, \quad a\in S^{m,\mu}(\R^d),u\in\caS(\R^d),
\end{equation}
extended by duality to $\caS^\prime(\R^d)$.
The operators in \eqref{eq:psidos} form a
graded algebra with respect to composition, that is,
$$
\Op (S ^{m_1,\mu _1})\circ \Op (S ^{m_2,\mu _2})
\subseteq \Op (S ^{m_1+m_2,\mu _1+\mu _2}).
$$
The symbol $c\in S ^{m_1+m_2,\mu _1+\mu _2}$ of the composed operator $\Op(a)\circ\Op(b)$,
$a\in S ^{m_1,\mu _1}$, $b\in S ^{m_2,\mu _2}$, admits the asymptotic expansion
\begin{equation}
	\label{eq:comp}
	c(x,\xi)\sim \sum_{\alpha}\frac{i^{|\alpha|}}{\alpha!}\,D^\alpha_\xi a(x,\xi)\, D^\alpha_x b(x,\xi),
\end{equation}
which implies that the symbol $c$ equals $a\cdot b$ modulo $S ^{m_1+m_2-1,\mu _1+\mu _2-1}$.

Note that
\[
	 S ^{-\infty,-\infty}=S ^{-\infty,-\infty}(\R^{d})= \bigcap_{(m,\mu) \in \R^2} S ^{m,\mu} (\R^{d})
	 =\caS(\R^{2d}).
\]
For any $a\in S^{m,\mu}$, $(m,\mu)\in\R^2$,
$\Op(a)$ is a linear continuous operator from $\caS(\R^d)$ to itself, extending to a linear
continuous operator from $\caS^\prime(\R^d)$ to itself, and from
$H^{s,\sigma}(\R^d)$ to $H^{s-m,\sigma-\mu}(\R^d)$,
where $H^{s,\sigma}=H^{s,\sigma}(\R^d)$,
$(s,\sigma) \in \R^2$, denotes the  Sobolev-Kato (or \textit{weighted Sobolev}) space
\begin{equation}\label{eq:skspace}
  	H^{s,\sigma}(\R^d)= \{u \in \caS^\prime(\R^{n}) \colon \|u\|_{s,\sigma}=
	\|{\jap}^s\pdd^\sigma u\|_{L^2}< \infty\},
\end{equation}
(here $\pdd^\sigma$ is understood as a pseudodifferential operator) with the naturally induced Hilbert norm. When $s\ge s^\prime$ and $\sigma\ge\sigma^\prime$, the continuous embedding 
$H^{s,\sigma}\hookrightarrow H^{s^\prime,\sigma^\prime}$ holds true. It is compact when $s>s^\prime$ and $\sigma>\sigma^\prime$.
Since $H^{s,\sigma}=\jap^{-s}\,H^{0,\sigma}=\jap^{-s}\, H^\sigma$, with $H^\sigma$ the usual Sobolev space of order $\sigma\in\R$, we 
find $\sigma>k+\dfrac{d}{2} \Rightarrow H^{s,\sigma}\hookrightarrow C^k(\R^d)$, $k\in\N_0$. One actually finds
\begin{equation}\label{eq:spdecomp}
	\bigcap_{s,\sigma\in\R}H^{s,\sigma}(\R^d)=H^{\infty,\infty}(\R^d)=\caS(\R^d),
	\quad
	\bigcup_{s,\sigma\in\R}H^{s,\sigma}(\R^d)=H^{-\infty,-\infty}(\R^d)=\caS^\prime(\R^d),
\end{equation}
as well as, for the space of \textit{rapidly decreasing distributions}, see \cite[Chap. VII, \S 5]{schwartz}, 
\begin{equation}\label{eq:rdd}
	\caS^\prime(\R^d)_\infty=\bigcap_{s\in\R}\bigcup_{\sigma\in\R}H^{s,\sigma}(\R^d)=H^{\infty,-\infty}(\R^d).
\end{equation}
The continuity property of
the elements of $\Op(S^{m,\mu})$ on the scale of spaces $H^{s,\sigma}(\R^d)$, $(m,\mu),(s,\sigma)\in\R^2$, is expressed 
more precisely in the next theorem.
\begin{theorem}[{\cite[Chap. 3, Theorem 1.1]{cordes}}] \label{thm:sobcont}
	Let $a\in S^{m,\mu}(\R^d)$, $(m,\mu)\in\R^2$. Then, for any $(s,\sigma)\in\R^2$, 
	$\Op(a)\in\mathcal{L}(H^{s,\sigma}(\R^d),H^{s-m,\sigma-\mu}(\R^d))$, and there exists a constant $C>0$,
	depending only on $d,m,\mu,s,\sigma$, such that
	\begin{equation}\label{eq:normsob}
		\|\Op(a)\|_{\scrL(H^{s,\sigma}(\R^d), H^{s-m,\sigma-\mu}(\R^d))}\le 
		C\vvvert a \vvvert_{\left[\frac{d}{2}\right]+1}^{m,\mu},
	\end{equation}
	where $[t]$ denotes the integer part of $t\in\R$.
\end{theorem}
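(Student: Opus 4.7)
I would prove this by a two-step reduction. First, I would reduce to the case of symbols and Sobolev indices all of order $(0,0)$, using elliptic order-reducing operators; then I would establish the $L^2$-boundedness of $\Op(b)$ for $b\in S^{0,0}$, which is the substantive content of the theorem.

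For the reduction, let $\Lambda^{r,\rho}:=\Op(\jap^r\csi^\rho)\in\Op(S^{r,\rho})$. Since $\jap^r\csi^\rho$ is an elliptic element of $S^{r,\rho}$, the standard $SG$ parametrix construction, rooted in the composition formula \eqref{eq:comp}, produces $\Lambda^{-r,-\rho}\in\Op(S^{-r,-\rho})$ satisfying $\Lambda^{-r,-\rho}\Lambda^{r,\rho}=I+K_{r,\rho}$ with $K_{r,\rho}\in\Op(S^{-\infty,-\infty})$. By the very definition \eqref{eq:skspace} of the Sobolev-Kato norm, $\|u\|_{H^{s,\sigma}}=\|\Lambda^{s,\sigma}u\|_{L^2}$, and likewise for the target space. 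Writing
\[
\Lambda^{s-m,\sigma-\mu}\,\Op(a)=\bigl[\Lambda^{s-m,\sigma-\mu}\,\Op(a)\,\Lambda^{-s,-\sigma}\bigr]\,\Lambda^{s,\sigma}-\Lambda^{s-m,\sigma-\mu}\,\Op(a)\,K_{s,\sigma},
\]
the bracketed composition equals $\Op(b)$ for some $b\in S^{0,0}$ by \eqref{eq:comp}, while the remainder is a smoothing operator mapping $\caS^\prime$ continuously into $\caS$. Since \eqref{eq:comp} is continuous in the Fr\'echet topology of the symbol classes, $\vvvert b\vvvert^{0,0}_\ell$ is controlled by $\vvvert a\vvvert^{m,\mu}_{\ell^\prime}$ for some $\ell^\prime$ depending only on $\ell,d,m,\mu,s,\sigma$. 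Hence \eqref{eq:normsob} reduces to the estimate
\[
\|\Op(b)\|_{\scrL(L^2)}\leq C\,\vvvert b\vvvert^{0,0}_{[d/2]+1},\qquad b\in S^{0,0}(\R^d).
\]

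For this bound, which is the $SG$ variant of the Calder\'on--Vaillancourt theorem, I would use Cotlar--Stein almost-orthogonality. Fix a partition of unity $\sum_{k\in\Z^d}\varphi_k(x)\equiv 1$, $\sum_{\ell\in\Z^d}\psi_\ell(\xi)\equiv 1$ subordinate to a uniform covering of $\R^d$ by unit cubes, and decompose $b=\sum_{k,\ell}\varphi_k\psi_\ell b$. Each $\Op(\varphi_k\psi_\ell b)$ is bounded on $L^2$ with norm uniform in $k,\ell$: the $\jap^{-|\alpha|}$ and $\csi^{-|\beta|}$ factors in the $SG$ estimates \eqref{eq:disSG} are bounded on the supports of $\varphi_k$ and $\psi_\ell$, so the seminorms of $\varphi_k\psi_\ell b$ are uniformly controlled, and a direct kernel estimate via integration by parts yields the $L^2$-bound. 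The cross-products $\Op(\varphi_k\psi_\ell b)\,\Op(\varphi_{k^\prime}\psi_{\ell^\prime}b)^\ast$ and $\Op(\varphi_k\psi_\ell b)^\ast\,\Op(\varphi_{k^\prime}\psi_{\ell^\prime}b)$ are analysed via \eqref{eq:comp}: integration by parts in the resulting oscillatory integrals, exploiting both the compact supports and the symbol decay, produces rapid decay of the operator norms in $|k-k^\prime|$ and $|\ell-\ell^\prime|$. Cotlar--Stein then yields the uniform $L^2$-bound on $\Op(b)$, and a careful count of the number of integrations by parts needed for integrability in $\R^d$ pins down the explicit index $[d/2]+1$.

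The main obstacle is precisely this second step. Step~1 is essentially formal bookkeeping in the $SG$ calculus already recalled in the excerpt. The delicate content of Step~2 is extracting the sharp seminorm count $[d/2]+1$ (rather than a larger and ineffective $\ell$): this requires the precise Schur-type kernel estimates combined with the almost-orthogonality machinery, and is precisely the content of Cordes' original proof cited in the statement.
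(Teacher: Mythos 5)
This theorem is not proved in the paper at all: it is quoted verbatim from Cordes \cite[Chap.~3, Theorem~1.1]{cordes}, so there is no internal proof to compare yours against. Judged on its own terms, your two-step outline (order reduction via $\Lambda^{r,\rho}=\Op(\jap^r\csi^\rho)$, then $L^2$-boundedness for $S^{0,0}$) is the standard and correct \emph{qualitative} strategy, and your algebraic identity for $\Lambda^{s-m,\sigma-\mu}\Op(a)$ is fine (note that with the left quantization $\Lambda^{s,\sigma}=\jap^s\pdd^\sigma$ exactly, so the norm identity you invoke is an equality, not just an equivalence).

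The genuine gap is the quantitative claim, i.e.\ the seminorm index $\left[\frac{d}{2}\right]+1$ in \eqref{eq:normsob}, which is the whole point of stating the estimate in this sharp form. Two steps of your argument destroy it. First, the Cotlar--Stein route with a unit-cube partition indexed by $(k,\ell)\in\Z^d\times\Z^d$ requires $\sum\sqrt{\|T_{k,\ell}T_{k^\prime,\ell^\prime}^{\ast}\|}<\infty$, hence operator-norm decay of order roughly $\langle k-k^\prime\rangle^{-2d-\eps}\langle \ell-\ell^\prime\rangle^{-2d-\eps}$ for the products; each integration by parts buys one power, so this costs on the order of $2d$ derivatives of $b$ in each variable, far more than $\left[\frac{d}{2}\right]+1$. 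No ``careful count'' rescues this; the sharp constant is obtained by a different device, namely Cordes' own decomposition $\Op(b)=c_d\int\!\!\int \langle y\rangle^{-2s}\langle\eta\rangle^{-2t}\,\sigma_{y,\eta}(b)\,W_{y,\eta}\,dy\,d\eta$ into unitary translation--modulation operators $W_{y,\eta}$, which yields $\|\Op(b)\|_{\scrL(L^2)}\le C\sup|(1-\Delta_x)^{s}(1-\Delta_\xi)^{t}b|$ for any $s,t>d/2$ and hence the integer count $\left[\frac{d}{2}\right]+1$. Second, even granting the sharp $L^2$ bound for the reduced symbol $b$, your reduction does not preserve the index: estimating $\vvvert b\vvvert^{0,0}_{[d/2]+1}$ for the composed symbol of $\Lambda^{s-m,\sigma-\mu}\Op(a)\Lambda^{-s,-\sigma}$ via the oscillatory-integral composition formula requires seminorms of $a$ of strictly higher order (the remainder estimates in \eqref{eq:comp} cost additional integrations by parts), and the same applies to the smoothing remainder $\Lambda^{s-m,\sigma-\mu}\Op(a)K_{s,\sigma}$, whose $\scrL(H^{s,\sigma},L^2)$ norm must also be bounded by $\vvvert a\vvvert^{m,\mu}_{[d/2]+1}$ for \eqref{eq:normsob} to hold as stated. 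So your sketch proves boundedness with \emph{some} finite seminorm, but not the statement as written; closing the gap essentially means reproducing Cordes' argument, which tracks the weights $\jap^s$, $\pdd^\sigma$ directly rather than through a two-stage reduction.
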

The class $\caO(m,\mu)$ of the \textit{operators of order $(m,\mu)$} is introduced as follows, see, e.g., \cite[Chap. 3, \S 3]{cordes}.
\begin{definition}\label{def:ordmmuopr}
	A linear continuous operator $A\colon\caS(\R^d)\to\caS(\R^d)$
	belongs to the class $\caO(m,\mu)$, $(m,\mu)\in\R^2$, of the operators of order $(m,\mu)$ if, for any $(s,\sigma)\in\R^2$,
	it extends to a linear continuous operator $A_{s,\sigma}\colon H^{s,\sigma}(\R^d)\to H^{s-m,\sigma-\mu}(\R^d)$. We also define
	\[
		\caO(\infty,\infty)=\bigcup_{(m,\mu)\in\R^2} \caO(m,\mu), \quad
		\caO(-\infty,-\infty)=\bigcap_{(m,\mu)\in\R^2} \caO(m,\mu).		
	\]
\end{definition}
\begin{remark}\label{rem:O}
	\begin{enumerate}
		\item[(i)] Trivially, any $A\in\caO(m,\mu)$ admits a linear continuous extension 
		$A_{\infty,\infty}\colon\caS^\prime(\R^d)\to\caS^\prime(\R^d)$. In fact, in view of \eqref{eq:spdecomp}, it is enough to set
		$A_{\infty,\infty}|_{H^{s,\sigma}(\R^d)}= A_{s,\sigma}$.
		\item[(ii)] Theorem \ref{thm:sobcont} implies $\Op(S^{m,\mu}(\R^d))\subset\caO(m,\mu)$, $(m,\mu)\in\R^2$.
		\item[(iii)] $\caO(\infty,\infty)$ and $\caO(0,0)$ are algebras under operator multiplication, $\caO(-\infty,-\infty)$ is an ideal
		of both  $\caO(\infty,\infty)$ and $\caO(0,0)$, and 
		$\caO(m_1,\mu_1)\circ\caO(m_2,\mu_2)\subset\caO(m_1+m_2,\mu_1+\mu_2)$.
	\end{enumerate}
\end{remark}
\noindent
The following characterization of the class $\caO(-\infty,-\infty)$ is often useful.
\begin{proposition}[{\cite[Ch. 3, Prop. 3.4]{cordes}}] \label{thm:smoothing}
	The class $\caO(-\infty,-\infty)$ coincides with $\Op(S^{-\infty,-\infty}(\R^d))$ and with the class of smoothing operators,
	that is, the set of all the linear continuous operators $A\colon\caS^\prime(\R^d)\to\caS(\R^d)$. All of them coincide with the
	class of linear continuous operators $A$ admitting a Schwartz kernel $k_A$ belonging to $\caS(\R^{2d})$. 
\end{proposition}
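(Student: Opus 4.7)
The plan is to establish the four-way equivalence by a cyclic chain of inclusions
\[
  \Op(S^{-\infty,-\infty}) \subseteq \caO(-\infty,-\infty) \subseteq \caL(\caS^\prime(\R^d),\caS(\R^d)) \subseteq \{A \colon k_A \in \caS(\R^{2d})\} \subseteq \Op(S^{-\infty,-\infty}).
\]
The first inclusion follows at once from Theorem \ref{thm:sobcont} combined with the identity $S^{-\infty,-\infty}=\bigcap_{(m,\mu)\in\R^2}S^{m,\mu}$: if $a\in S^{-\infty,-\infty}$, then $\Op(a)$ maps $H^{s,\sigma}(\R^d)$ continuously into $H^{s-m,\sigma-\mu}(\R^d)$ for every $(s,\sigma),(m,\mu)\in\R^2$, which is exactly the defining property of $\caO(-\infty,-\infty)$ (cf.\ Remark \ref{rem:O}(ii)). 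For the second inclusion I would use \eqref{eq:spdecomp}, which identifies $\caS(\R^d)=\bigcap_{s,\sigma}H^{s,\sigma}(\R^d)$ and $\caS^\prime(\R^d)=\bigcup_{s,\sigma}H^{s,\sigma}(\R^d)$: given $A\in\caO(-\infty,-\infty)$ and fixed $(s,\sigma)$, the image $A(H^{s,\sigma})$ lies in every $H^{s',\sigma'}(\R^d)$ (by picking $(m,\mu)=(s-s',\sigma-\sigma')$), hence in $\caS(\R^d)$, continuously for each pair of Sobolev-Kato norms. Passing to the inductive limit on the source via Remark \ref{rem:O}(i) and to the projective limit on the target yields a continuous map $\caS^\prime(\R^d)\to\caS(\R^d)$.

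The third inclusion is the deepest step and is where I expect the main technical weight to lie. It relies on the Schwartz kernel theorem together with the nuclearity of $\caS(\R^d)$, which gives the identification $\caL(\caS^\prime(\R^d),\caS(\R^d))\cong\caS(\R^d)\,\widehat{\otimes}\,\caS(\R^d)\cong\caS(\R^{2d})$, the isomorphism sending each operator $A$ to its Schwartz kernel $k_A$. Concretely, one observes that any continuous $A\colon\caS^\prime(\R^d)\to\caS(\R^d)$ induces a separately continuous bilinear form $(u,v)\mapsto\langle Au,v\rangle$ on $\caS^\prime(\R^d)\times\caS^\prime(\R^d)$, which by nuclearity is jointly continuous and therefore representable by a kernel in the strong dual of $\caS^\prime(\R^d)\,\widehat{\otimes}\,\caS^\prime(\R^d)$, namely $\caS(\R^{2d})$. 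The nuclearity is what upgrades Schwartz-distribution regularity of the kernel to full Schwartz regularity; this is the non-trivial ingredient of the whole argument.

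Finally, for the fourth inclusion, given $A$ with Schwartz kernel $k_A\in\caS(\R^{2d})$ I would recover a symbol by the standard kernel-to-symbol recipe $a(x,\xi)=\int_{\R^d} e^{-\ii y\cdot\xi}\,k_A(x,x-y)\,dy$. Since the affine change of variables $(x,y)\mapsto(x,x-y)$ preserves $\caS(\R^{2d})$, and the partial Fourier transform in $y$ is an automorphism of $\caS(\R^{2d})$, one obtains $a\in\caS(\R^{2d})=S^{-\infty,-\infty}(\R^d)$, and direct inspection shows $A=\Op(a)$, which closes the cycle and completes the proof.
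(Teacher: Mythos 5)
Your proposal is correct. Note that the paper does not prove this proposition at all: it is quoted verbatim from Cordes \cite[Ch.~3, Prop.~3.4]{cordes}, so there is no in-paper argument to compare against. Your cyclic chain of inclusions is the standard route and each step is sound: the first inclusion is immediate from Theorem \ref{thm:sobcont} and $S^{-\infty,-\infty}=\bigcap_{(m,\mu)}S^{m,\mu}$; the second uses \eqref{eq:spdecomp} together with the (standard, but worth stating) fact that the inductive-limit topology on $\bigcup_{s,\sigma}H^{s,\sigma}(\R^d)$ coincides with the strong topology of $\caS^\prime(\R^d)$, so that continuity on each step suffices; the third is the nuclear kernel theorem in the form $\caL(\caS^\prime(\R^d),\caS(\R^d))\cong\caS(\R^d)\,\widehat{\otimes}\,\caS(\R^d)\cong\caS(\R^{2d})$; and the fourth correctly recovers the symbol via $a(x,\xi)=\int e^{-\ii y\cdot\xi}k_A(x,x-y)\,dy$, which lands in $\caS(\R^{2d})=S^{-\infty,-\infty}(\R^d)$ and reproduces $A$ as $\Op(a)$. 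This is essentially the proof in Cordes, and I see no gap.
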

An operator $A=\Op(a)$ and its symbol $a\in S ^{m,\mu}$ are called \emph{elliptic}
(or $S ^{m,\mu}$-\emph{elliptic}) if there exists $R\ge0$ such that
%
\[
	C\x^{m} \csi^{\mu}\le |a(x,\xi)|,\qquad 
	|x|+|\xi|\ge R,
\] 
for some constant $C>0$. If $R=0$, $a^{-1}$ is everywhere well-defined and smooth, and $a^{-1}\in S ^{-m,-\mu}$.
If $R>0$, then $a^{-1}$ can be extended to the whole of $\R^{2d}$ so that the extension $\widetilde{a}_{-1}$ satisfies $\widetilde{a}_{-1}\in S ^{-m,-\mu}$.
An elliptic $SG$ operator $A \in \Op (S ^{m,\mu})$ admits a
parametrix $A_{-1}\in \Op (S ^{-m,-\mu})$ such that
\[
A_{-1}A=I + R_1, \quad AA_{-1}= I+ R_2,
\]
for suitable $R_1, R_2\in\Op(S^{-\infty,-\infty})$, where $I$ denotes the identity operator. 
In such a case, $A$ turns out to be a Fredholm
operator on the scale of functional spaces $H^{s,\sigma}$,
$(s,\sigma)\in\R^2$.

\medskip

\end{document}